\newtheorem{thm}{Theorem}[section]
\newtheorem{cor}[thm]{Corollary}
\newtheorem{prop}[thm]{Proposition}
\newtheorem{lem}[thm]{Lemma}
\newtheorem{defn}[thm]{Definition}
\theoremstyle{remark}
\newtheorem{rem}[thm]{Remark}
\newcommand{\Q}{\mathbb{Q}}
\newcommand{\C}{\mathbb{C}}
\newcommand{\N}{\mathbb{N}}
\newcommand{\Z}{\mathbb{Z}}
\newcommand{\K}{\mathbbm{k}}
\newcommand{\F}{\mathbb{F}}
\newcommand{\g}{\mathfrak{g}}
\renewcommand{\r}{{\mathfrak{r}}}
\renewcommand{\sl}{\mathfrak{sl}}
\newcommand{\m}{\mathfrak{m}}
\newcommand{\z}{\mathfrak{z}}
\newcommand{\n}{\mathfrak{n}}
\renewcommand{\O}{\mathcal{O}}
\newcommand{\B}{\mathcal{B}}
\newcommand{\D}{\mathcal{D}}
\renewcommand{\P}{\mathcal{P}}
\newcommand{\Wh}{\mathcal{W}}
\newcommand{\G}{\mathcal{G}}
\newcommand{\ad}{\operatorname{ad}}
\newcommand{\Lie}{\operatorname{Lie}}
\newcommand{\op}{{\operatorname{op}}}
\newcommand{\End}{\operatorname{End}}
\newcommand{\md}{\operatorname{-mod}}
\newcommand{\Mat}{\operatorname{Mat}}
\newcommand{\Spec}{\operatorname{Specm}}
\newcommand{\gr}{\operatorname{gr}}
\newcommand{\Coker}{\operatorname{Coker}}
\newcommand{\Ker}{\operatorname{Ker}}
\newcommand{\Ann}{\operatorname{Ann}}
\newcommand{\Hom}{\operatorname{Hom}}
\newcommand{\Ad}{\operatorname{Ad}}
\newcommand{\Aut}{{\operatorname{Aut}}}
\newcommand{\Id}{{\operatorname{Id}}}
\newcommand{\oTheta}{\overline{\Theta}}
\newcommand{\wU}{\widehat{U}}
\newcommand{\wI}{\widehat{I}}
\renewcommand{\a}{{\textbf{a}}}
\renewcommand{\b}{{\textbf{b}}}
\newcommand{\mpg}{\m^{\perp_\g}}
\begin{document}
\title{A Morita theorem for modular finite $W$-algebras}
\author{L. W. Topley\footnote{\sc Address: Torre Archimede, Via Trieste 63, 35121-I Padova, Italy \newline Email: lewis@math.unipd.it \newline Telephone: (0039) 329 0947625}}
\date{\small{\today}}
\maketitle

\newcounter{parno}
\renewcommand{\theparno}{{\bf \thesection.\arabic{parno}}}
\newcommand{\p}{\smallskip \refstepcounter{parno}\noindent \theparno \space }
\newcommand{\clear}{\setcounter{parno}{0}}

\newcounter{parnoa}
\renewcommand{\theparnoa}{{\bf \thesubsection.\arabic{parnoa}}}

\abstract{We consider the Lie algebra $\g$ of a simple, simply connected algebraic group over a field of large positive characteristic. For each nilpotent orbit $\O \subseteq \g$ we choose a representative $e\in \O$ and attach a certain filtered, associative algebra $\wU(\g,e)$ known as a finite $W$-algebra, defined to be the opposite endomorphism ring of the generalised Gelfand-Graev module associated to $(\g, e)$. This is shown to be Morita equivalent to a certain central reduction of the enveloping algebra of $U(\g)$. The result may be seen as a modular version of Skryabin's equivalence.

}
\medskip
\noindent \textbf{Keywords.} modular representation theory; restricted Lie algebras; finite $W$-algebras; Skryabin's equivalence. 

\medskip
\noindent \textbf{MSC.} primary 17B50; secondary 16S30, 17B08

\section{Introduction}\label{intro}
\setcounter{parno}{0}

\p A complete classification of irreducible representations of the Lie algebras associated to reductive algebraic groups in positive characteristic $p$ has
remained elusive since the commencement of the theory. It is known that all such representations are finite dimensional, and a fundamental observation
of Kac and Weisfeiler shows that they all factor through certain finite dimensional quotients $U(\g) \twoheadrightarrow U_\chi(\g)$ called reduced enveloping
algebras. Twenty years ago Premet obtained lower bounds on the dimensions of $U_\chi(\g)$-modules \cite{Pr95b} under mild assumptions on $p$, confirming
a long-standing conjecture of \cite{KW71}. These bounds soon led him to define two families of algebras \cite{Pr02}: the restricted finite $W$-algebras and the
ordinary finite $W$-algebras over $\C$. It was immediately demonstrated that the reduced enveloping algebra is a matrix algebra over the restricted finite
$W$-algebra, whilst the ordinary finite $W$-algebra has allowed wonderful progress to be made in the representation theory of complex semisimple Lie
algebras (see \cite{Lo10a} for example). In this paper we consider the finite $W$-algebra $\wU(\g,e)$, which is defined as the opposite endomorphism ring 
of the mod $p$ reduction of the ordinary generalised Gelfand--Graev module. We prove an unrestricted version of the matrix isomorphism theorem mentioned above, which
should be seen as a modular version of Skryabin's equivalence, and which (we hope) shall pave the way for similar progress to be made in the representation
theory of modular Lie algebras.

\p\label{settup} Let $G$ be a simple, simply connected algebraic group over an algebraically closed field $\K$ of very good characteristic $p > 0$, and let $\g$ be its Lie algebra.
For $\chi \in \g^*$ we make the notation
\begin{eqnarray*}
& & d(\chi) := \frac{1}{2}\dim \Ad^*(G)\chi;\\
& & D(\chi) := p^{d(\chi)}.
\end{eqnarray*}
Since $\g \cong \g^*$ as $G$-modules (see \cite[Lemma~I.5.3]{SS70}, for example) we may transfer the notions of nilpotent and semisimple to elements of $\g^*$ and in order to understand the representation theory of
$\g$ it is sufficient to study representations of reduced enveloping algebras $U_\chi(\g)$ where $\chi \in\g^*$ is nilpotent (see Section~\ref{reas} for more detail). 
The nilpotent orbits are classified by the Bala--Carter theorem and we begin by choosing a label {\sf X} corresponding to a nilpotent coadjoint orbit $\O$.
Following the work of Premet (\cite{Pr07}, \cite{Pr10a}) we choose an element $\chi\in \g^*$ associated to a nilpotent element $e\in \g$ and attach a nilpotent algebra
$\m \subseteq \g$ with nice properties. This algebra is obtained by reduction modulo $p$, which can only be carried out
provided $p$ is larger than an unknown bound, \emph{for the rest of the section
we assume that $p$ is larger than the number $\Pi({\sf X})$ introduced in Definition~\ref{pidef}}. Note that this bound depends upon $\Phi, {\sf X}$ and a certain choice of nilpotent element
in the corresponding complex simple Lie algebra. Later on (Remark~\ref{reeem}) we shall explain why it is not yet theoretically possible to calculate $\Pi({\sf X})$, or even to bound it.
An approach which relaxes this bound will be considered in a forthcoming article \cite{GT16} joint with Simon Goodwin.

\p The linear form $\chi$ restricts to a character on $\m$, and the corresponding one dimensional module $\K_\chi$ has $p$-character $\chi|_{\m}$. The induced module
$Q_\chi^{[p]}:=U_\chi(\g)\bigotimes_{U_\chi(\m)}\K_\chi$ is known as the restricted generalised Gelfand-Graev module, whilst
$$U^{[p]}(\g,e) := \End_\g(Q_\chi^{[p]})^\op$$ is called the restricted finite $W$-algebra. Premet's Morita theorem \cite[Proposition~4.1]{Pr07}
states that $U_\chi(\g)$ is isomorphic to ${D(\chi)}$ copies of $Q_\chi^{[p]}$ as a left $\g$-module, and isomorphic to $\Mat_{{D(\chi)}} U^{[p]}(\g,e)$ as algebras.
The purpose of this article is to prove analogues of this theorem for the unrestricted GGG module and FWA. We shall shall call the following
pair the ``generalised Gelfand-Graev module'' and ``finite $W$-algebra'' respectively
\begin{eqnarray*}
Q_\chi := U(\g) \bigotimes_{U(\m)} \K_\chi ; \\
\wU(\g,e) := \End_\g(Q_\chi)^\op. 
\end{eqnarray*}
These are defined in parallel with the more famous finite $W$-algebra over $\C$, which we shall call the ordinary FWA in this article. See Remark~\ref{discussdefns} for
a comparison of the various definitions of modular finite $W$-algebras which occur in \cite{Pr07} and \cite{Pr10a}.

\p The enveloping algebra $U(\g)$ is a free module of finite rank over its $p$-centre $Z_0(\g)$ and the latter algebra is isomorphic to $S(\g)^p = \{ f^p : f\in S(\g) \}$  which identifies naturally with the coordinate ring $\K[(\g^\ast)^{(1)}]$ on the Frobenius twist of $\g^\ast$. Write $\m^\perp \subseteq \g^\ast$ for the space $\{f \in \g^\ast : f(\m) = 0\}$. Since $(\g^\ast)^{(1)}$ identifies with $\g^*$ as topological spaces we may denote by $I(\m)$ the prime ideal of $Z_0(\g)$ consisting of regular functions vanishing on $\chi + \m^\perp$ and consider the central reduction
$$U_{I(\m)}(\g) := U(\g)/I(\m)U(\g).$$
The following theorem is the main result of this paper. Its consequences and generalisations shall be discussed following the proof which is given in Section~\ref{proofofthe}.
\begin{thm}\label{mainthm}
For $p \geq \Pi({\sf X})$ the following holds:
\begin{enumerate}
\item{$U_{I(\m)}(\g) \cong \bigoplus_{{D(\chi)}} Q_\chi$ as left $U(\g)$-modules}
\item{$U_{I(\m)}(\g) \cong \Mat_{{D(\chi)}} \wU(\g,e)$ as algebras.}
\end{enumerate}
\end{thm}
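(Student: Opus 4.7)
The plan is to realise $U_{I(\m)}(\g)$ as a ``family'' of reduced enveloping algebras parametrised by the affine space $\chi + \m^\perp$, and to lift Premet's restricted Morita theorem from the single point $\chi$ to the entire central subalgebra $\K[\chi+\m^\perp] = Z_0(\g)/I(\m)$ via a flatness/Nakayama argument. Under this identification, $\chi$ corresponds to the maximal ideal $J := I_\chi/I(\m)$ of $\K[\chi+\m^\perp]$, and reducing modulo $J$ should send $U_{I(\m)}(\g)$, $Q_\chi$, $\widehat{U}(\g,e)$ to $U_\chi(\g)$, $Q_\chi^{[p]}$, $U^{[p]}(\g,e)$ respectively, so that Premet's theorem applies to the closed fibre.

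The first step is to verify that $Q_\chi$ is a $U_{I(\m)}(\g)$-module: for each generator $\xi(y) - \chi(y)^p$ of $I(\m)$ with $y \in \m$, the identity $\xi(y) = y^p - y^{[p]}$ together with the hypothesis $\chi(\m^{[p]}) = 0$ built into Premet's construction of $\m$ forces this element to annihilate the cyclic generator $1\otimes 1 \in Q_\chi$, and centrality does the rest. In particular $\widehat{U}(\g,e)$ becomes naturally a $\K[\chi+\m^\perp]$-algebra. I would then show that $U_{I(\m)}(\g)$, $Q_\chi$, and $\widehat{U}(\g,e)$ are finitely generated free modules over $\K[\chi+\m^\perp]$: the first two by an explicit PBW basis using the freeness of $U(\g)$ over $Z_0(\g)$; the last by equipping $\widehat{U}(\g,e)$ with a Kazhdan-type filtration and importing the known PBW structure of the ordinary finite $W$-algebra over $\C$ via a reduction-mod-$p$ argument.

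For part (1), I would construct a left $U(\g)$-module map $\Phi : \bigoplus_{D(\chi)} Q_\chi \to U_{I(\m)}(\g)$ by exhibiting $D(\chi)$ elements $v_1,\ldots,v_{D(\chi)} \in U_{I(\m)}(\g)$, each annihilated by $y - \chi(y)$ for $y \in \m$, whose reductions modulo $J$ realise Premet's restricted decomposition \cite[2.3, 2.6]{Pr02}. Since source and target are finite free $\K[\chi+\m^\perp]$-modules of equal rank and $\Phi \otimes_{\K[\chi+\m^\perp]} \K$ is an isomorphism at the closed point $\chi$, a graded Nakayama argument (exploiting the nonnegative Kazhdan filtration on both sides) promotes $\Phi$ itself to an isomorphism. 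Part (2) is then automatic from Morita theory: $Q_\chi$ is a progenerator for $U_{I(\m)}(\g)$-modules, so $U_{I(\m)}(\g) \cong \End_{U_{I(\m)}(\g)}(\bigoplus_{D(\chi)} Q_\chi)^{\op} = \Mat_{D(\chi)}(\widehat U(\g,e))$.

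I expect the main obstacle to be the identification of the fibre $\widehat{U}(\g,e)/J\widehat{U}(\g,e)$ with $U^{[p]}(\g,e)$ --- equivalently, showing that the functor $\End_\g(-)^{\op}$ commutes with the base change $\K[\chi+\m^\perp] \to \K$ --- together with establishing the finite free structure on $\widehat{U}(\g,e)$. I anticipate this resting on a delicate comparison between $\widehat{U}(\g,e)$ and its characteristic zero counterpart via reduction mod $p$, and suspect it is precisely this step that necessitates the assumption $p \gg 0$.
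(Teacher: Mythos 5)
Your overall skeleton --- showing $I(\m)$ kills $Q_\chi$, working over the polynomial base $Z_0(\g)/I(\m)=\K[\chi+\m^\perp]$, comparing free modules of equal rank $p^{\dim\g}$, and deducing (2) from (1) via $\End$ of the regular module and $\End_\g(\bigoplus_{D(\chi)}Q_\chi)\cong\Mat_{D(\chi)}\End_\g(Q_\chi)$ --- matches the paper. The genuine gap is the construction of your comparison map $\Phi:\bigoplus_{D(\chi)}Q_\chi\to U_{I(\m)}(\g)$. A $U(\g)$-module map $Q_\chi\to U_{I(\m)}(\g)$ is the same thing as a vector $v\in U_{I(\m)}(\g)$ with $(y-\chi(y))v=0$ for all $y\in\m$, i.e.\ an element of $\Hom_{U(\g)}(Q_\chi,U_{I(\m)}(\g))$; this Whittaker-vector functor is left exact but not right exact, so there is no a priori reason that reduction modulo $J=I_\chi/I(\m)$ maps this space onto the corresponding space of Whittaker vectors in the fibre $U_\chi(\g)$. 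Thus ``exhibiting $D(\chi)$ elements whose reductions realise Premet's restricted decomposition'' is precisely the hard point, and the natural tools for such a lifting (projectivity of $Q_\chi$ over $U_{I(\m)}(\g)$, or the decomposition itself) are consequences of the theorem you are proving --- Corollary \ref{maincor} is deduced from Theorem \ref{mainthm}, not available beforehand. The paper avoids this obstruction by running the map in the opposite direction: $\varphi:U(\g)\to\bigoplus_{D(\chi)}Q_\chi$, $u\mapsto\sum_i uc_i$, where the $c_i$ are Premet's universal basis elements $\Bp\subseteq Q_\chi$ from \cite[Lemma 2.3]{Pr10a} (the only place $p\gg0$ enters). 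Defining such a map costs nothing; surjectivity is then checked fibrewise at every $\eta\in\chi+\m^\perp$, which is legitimate because the coinvariant functors $\pi_\eta$ are right exact, and injectivity of the induced map on $U_{I(\m)}(\g)$ follows from the equal-rank freeness over the polynomial base. The paper's own remark that $\pi_\eta\Ker(\varphi)\neq\Ker(\varphi_\eta)$ in general is the mirror image of the exactness failure that blocks your lifting step.

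Two secondary points. First, even granted the $v_i$, an isomorphism on the single closed fibre at $\chi$ only yields an isomorphism after localising at $J$; for your graded Nakayama argument you must choose the $v_i$ compatibly with the Kazhdan filtration and verify that the contracting $\K^\ast$-action makes $\K[\chi+\m^\perp]$ connected, nonnegatively graded with $J$ as the irrelevant ideal (this is true, since the complement of $\m$ sits in Kazhdan degrees $\geq 1$, but it must be established); the paper sidesteps this entirely by invoking Premet's results at every $\eta\in\chi+\m^\perp$ rather than only at $\chi$. Second, the freeness of $\wU(\g,e)$ over the base and the base-change statement $\wU(\g,e)/J\wU(\g,e)\cong U^{[p]}(\g,e)$, which you single out as the main obstacle, are not actually needed: once (1) is proved, (2) is formal, and the reduced $W$-algebras $U_\eta(\g,e)=\End_\g(\pi_\eta Q_\chi)^\op$ enter only through \cite[Lemma 2.2]{Pr10a} and \cite[Lemma 2.3]{Pr10a}.
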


\begin{rem}\label{rem1}
This theorem is very much in the spirit of Premet's original Morita theorem proven in \cite[Theorem~2.3]{Pr02} and we conjecture our theorem should hold in the same general context of $\xi$-admissible algebras. In particular, let
$\r$ be a restricted Lie algebra over any field of positive characteristic, $\xi \in \r^\ast$ and $\n \subseteq \r$ a $\xi$-admissible algebra in the sense of definition~2.3 in {\it op. cit}. We write $\K_\xi$ for the one dimensional representation of $\n$ afforded by $\xi$.
Let $I$ be the ideal of $Z_0(\r)$ vanishing on $\xi + \n^\perp$. Then we conjecture that $U(\r) \bigotimes_{U(\n)} \K_\xi$ is a projective $U(\r) / IU(\r)$-module and
$$U(\r)/IU(\r) \cong \Mat_{p^{\dim \n }} \End_\r( U(\r) \bigotimes_{U(\n)} \K_\xi)^\op.$$
\end{rem}

\p This paper is organised as follows. In Section~\ref{reas} we recall some classical results which reduce the study of representations of $\g$ to the
study of $U_\chi(\g)$-modules as $\chi$ varies over a set of representatives for the nilpotent coadjoint $G$-orbits. In Section~\ref{redmodp} we
recall the process of reduction modulo $p$ for finite $W$-algebras and recall some of their fundamental properties, first discovered by Premet.
The first paragraph of Section~\ref{generalprops} is pivotal, as it is there that
we explain how to reverse engineer the reduction modulo $p$ process. In other words, we explain how to choose a root system $\Phi$, a Bala--Carter label {\sf X}
corresponding to a nilpotent orbit and a nilpotent element $e$ in the complex Lie algebra corresponding to $\Phi$. From these pieces of data we can
choose a lower bound $\Pi({\sf X})$ on the characteristic $p$ of $\K$ such that for $p > \Pi({\sf X})$ we are able to reduce $e$ into the corresponding Lie algebra $\g$
defined over $\K$, define $\chi$ in the associated coadjoint orbit and construct $\m$ and hence $\wU(\g,e)$ by reduction modulo $p$. At this point we will have placed
our discussion in the context of the main theorem. In Section \ref{generalprops} we consider central reductions of enveloping algebras and their modules
and prove some general facts, which are immediately applied to the GGG modules. In Section~\ref{proofofthe} we supply the proof of the main theorem
and prove a corollary which states that $Q_\chi$ is a projective generator for $U_{I(\m)}(\g)\md$. Some of the most celebrated applications of ordinary finite
$W$-algebras are their applications to the representation theory of complex semisimple Lie algebras (see \cite{Lo10b}, \cite{Pr14} and the references therein).
The most fundamental result of this nature is the celebrated Skryabin equivalence which describes an equivalence between the category of modules for the
 ordinary finite $W$-algebra and a certain full subcategory of modules for the associated enveloping algebra, the category of so-called generalised Whittaker
 modules (first proven in \cite[Appendix]{Pr02}; see (\ref{skryequiv1}) for a summary). Our main theorem may be seen as a modular version of Skryabin's
 equivalence and we make this comparison explicit in Theorem \ref{Skry} of Section \ref{Skryabins}. In Section \ref{pcharofmods} we recall an important
 theorem proven independently by Premet, Losev and Ginzburg which characterises the simple modules which are obtained from Skryabin's equivalence (over $\C)$
 in terms of their associated varieties. We show that in the modular case, the modules obtained from Skryabin's equivalence can be characterised in a very
 similar manner, with decomposition classes of $p$-characters replacing associated varieties (Theorem \ref{invariantsofsimples}). In the last section we
 obtain a vast generalisation of the main theorem: for each ideal $I \unlhd Z_0(\g)$ containing $I(\m)$ we prove that the algebra $U(\g)/IU(\g)$ is a matrix
 algebra of rank $D(\chi)$ over a certain quotient of $\wU(\g,e)$ which we describe fairly explicitly (Theorem \ref{generalise}). Our final result,
 Corollary~\ref{comdiag}, clarifies the relationship between restricted and unrestricted modular finite $W$-algebras.

\p Given the interest surrounding ordinary finite $W$-algebras in the past thirteen years and the numerous breakthroughs regarding ordinary representations of Lie algebras, it is the author's hope that similar progress may be made in the modular case using these finite $W$-algebras. Losev famously constructed the ordinary finite $W$-algebra as as the Fedesov quantisation of the Poisson algebra structure which occurs naturally on the Slodowy slice \cite{Lo10b}. Although the theory of Fedesov quantisation in positive characteristics has only recently started to be developed in \cite{BK08} it is natural to wonder whether there exists a description of $\wU(\g,e)$ comparable to Losev's.

\medskip

\noindent {\bf Acknowledgement.} I would like to thank the anonymous referee for making many helpful suggestions to improve the clarity of this article, and for explaining the simple argument for Theorem~\ref{generalise}. I would also like to thank Vanessa Miemietz and Shaun Stevens at the University of East Anglia for many useful discussions, and Max Nazarov for hosting me at the University of York. The research leading to these results has received funding from the European Commission, Seventh Framework Programme, under Grant Agreement number 600376, as well as grant CPDA125818/12 from the University of Padova.

\section{Reduced enveloping algebras}\label{reas}
\setcounter{parno}{0}

\p The facts we recall in this section are extremely classical; we recommend either \cite{Ja98} or \cite{Ja04b} for a survey of the theory. We continue with the assumptions of the introduction, although \emph{in this current section the characteristic $p$ of $\K$ may be good for $G$, although we now assume that $\g \cong \g^*$ as $G$-modules}. Recall that $G$ is a simple, simply connected algebraic group over $\K$. The fact that $\g = \Lie(G)$ endows $\g$ with a $G$-equivariant restricted structure which we denote by $x \mapsto x^{[p]}$. The enveloping algebra $U(\g)$ is naturally filtered $U(\g) =  \bigcup_{i\geq 0} U_i(\g)$ and the PBW theorem states that the associated graded algebra $\gr U(\g)$ is the symmetric algebra $S(\g)$. The $p$-centre $Z_0(\g) \subseteq U(\g)$ is the central subalgebra generated by expressions $x^p - x^{[p]}$ with $x\in \g$. As $x \mapsto x^{[p]}$ is $G$-equivariant, $Z_0(\g)$ is isomorphic to $S(\g)^p := \{f^p : f \in S(\g)\}$ as $G$-modules which, in turn, is isomorphic to the coordinate ring $\K[(\g^\ast)^{(1)}]$. This isomorphism leads to an explicit description of the maximal spectrum
\begin{eqnarray}\label{oneequat}
\Spec Z_0(\g) \overset{G}{\longleftrightarrow} (\g^\ast)^{(1)}.
\end{eqnarray}
\noindent For $\chi \in \g^*$ the map $x^p - x^{[p]} \mapsto \chi(x)^p$ extends to an algebra homomorphism $Z_0(\g) \rightarrow \K$ and the kernel is denoted $I_\chi$. Now the bijection in (\ref{oneequat}) can be made explicit: to each $\chi \in \g^\ast$ we associate the maximal ideal
\begin{eqnarray}\label{idealIchi}
I_\chi = (x^p - x^{[p]} - \chi(x)^p : x\in \g)\unlhd Z_0(\g)
\end{eqnarray}

\p The reduced enveloping algebra with $p$-character $\chi \in \g^\ast$ is defined $$U_\chi(\g) := U(\g)/ I_\chi U(\g).$$ It has dimension $p^{\dim(\g)}$ and every simple $\g$-module factors through precisely one of the quotients $U(\g) \twoheadrightarrow U_\chi(\g)$. The restricted structure on $\g$ allows us to define the notions of nilpotent and semisimple elements of $\g$, and these definitions play an important role in representation theory. According to \cite[Theorem~2.3.5]{FS88} the usual Jordan decomposition theorem holds in this case. The nilpotent orbits are classified by the Bala--Carter method, later improved in \cite{Po80} and \cite{Pr03}. There are two features of the classification pertinent to our current discussion:
\begin{enumerate}
\item{there are finitely many nilpotent orbits;}
\item{the labels which classify the orbits are the same in every good characteristic. These are referred to as ``Bala--Carter labels'' in this article.}
\end{enumerate}

\p Since the characteristic has been assumed to be very good we can pick an isomorphism $\g \rightarrow \g^\ast$. This allows us to transfer the Jordan decomposition to elements of $\g^\ast$, which subtends the notion of a nilpotent or semisimple $p$-characters. If $\chi = \chi_s + \chi_n$ is the Jordan decomposition then a famous result due to Kac--Weisfeiler \cite[Theorem~2]{KW71} (later refined by Friedlander--Parshall \cite[Theorem 3.2]{FP88} and generalised by Premet \cite[Proposition~2.5]{Pr02}) states that $U_\chi(\g)$ is Morita equivalent to $U_{\chi}(\g_{\chi_s})$. The centraliser $\g_{\chi_s} := \{x \in \g : \ad^\ast(x)\chi_s = 0\}$ is the Lie algebra of a Levi subgroup $L$ of $G$ such that the simple factors of $[L, L]$ satisfy the same hypotheses we have placed on $G$. Using a reduction argument this allows us to assume that the $p$-character is supported on the centre of $\g$. Another easy reduction allows to to suppose that in fact $\chi_s = 0$ (see \cite[B.8, B.9]{Ja04b} for more detail). If we are interested in understanding the representation theory of $\g$ then our remarks thus far indicate that we may focus our attention on reduced enveloping algebras with nilpotent $p$-characters. Furthermore, the adjoint action of $G$ on $U(\g)$ induces isomorphisms $U_\chi(\g) \cong U_{\Ad^\ast(g)\chi}(\g)$ which shows that it suffices to allow $\chi$ to vary over a finite set of representatives of the nilpotent coadjoint orbits.

\section{Reducing the ordinary finite $W$-algebra modulo $p$}\label{redmodp}
\setcounter{parno}{0}

\p The purpose of this section is to recall the details of the reduction modulo $p$ process for
finite $W$-algebras which was used to great effect by Alexander Premet in \cite{Pr07} and \cite{Pr10a}.
Although the setup here is very similar to those two papers, it is identical to neither and so we
we must recall this process in full. The notation of the current section shall be slightly different
to the others since we cannot fix a single choice of prime characteristic throughout.

\p\label{cfwa} Fix throughout this section a reduced, indecomposable, crystallographic root system
$\Phi$. We let $G_0$ be a simple, simply connected algebraic group
over $\C$ with root system $\Phi$ and Lie algebra $\g_0 = \Lie(G_0)$.
Choose a Bala--Carter label {\sf X} associated to the root system.
Choose a Chevalley basis $\B$ for $\g_0$ and let $\g_\Z$ denote the corresponding Chevalley $\Z$-form.
We denote by $\kappa_0 : \g_0\times \g_0 \rightarrow \C$ the Killing form.
For any good prime characteristic $p$ we write $\K_p :=  \overline{\F}_p$ with identity element $1_p$.
Denote by $G_p$ the simple, simply connected algebraic group over $\K_p$ with root system $\Phi$ and by $\g_p$ its Lie algebra,
which we identify with $\g_\Z \bigotimes_\Z \K_p$.
The construction of the ordinary finite $W$-algebra begins with a choice of element in the orbit $\O$ assoiated to {\sf X}.
\begin{lem}\label{goodreps}
For each nilpotent orbit $\O \subseteq \g_0$ we can choose a representative $e$ such that the $G_p$-orbit of
$e \bigotimes_\Z 1$ in $\g_p$ has the same Bala--Carter label as $\O$. Furthermore
$$\dim_\C (G_0 e) = \dim_{\K_p} (G_p e\otimes 1).$$
\end{lem}
\begin{proof}
Combine \cite[Ch. III, 4.29]{SS70} and \cite[Theorem~2.6\&~2.7]{Pr03}.
\end{proof}

\p\label{formandadmiss} For each root system and each nilpotent orbit in $\g_0$ a choice of such $e$ shall remain fixed throughout this section.
Since the Jacobson--Morozov theorem holds over $\g_\Q := \g_\Z \bigotimes_\Z \Q$ we can choose an $\sl_2$-triple $(e,h,f)$ in $\g_\Q$.
The form $\kappa_0$ can be rescaled to a form $(\cdot, \cdot)_0 : \g_0 \times \g_0 \rightarrow \C$ with $(e,f)_0 =1$.
\begin{defn}\label{admissible}
We say that a ring $A$ satisfying $\Z \subseteq A \subseteq \Q$  is ``admissible for ${\sf X}$'' if $A = \Z[S^{-1}]$ where $S$
is a finite set of prime numbers containing:
\begin{itemize}
\item{all primes which are not very good for $\Phi$;}
\item{all prime factors of the determinant of the Gram matrix $\left(\kappa (b_1, b_2) \right)_{b_1, b_2 \in \B}$;}
\item{all of the prime factors of $\kappa_0(e,f)$, which is an integer by $\sl_2$-theory.}
\end{itemize}
We denote by $\pi(A)$ the set of all primes $p \in \Z$ with $p A \neq A$.
\end{defn}
\noindent We pick an admissible ring now (say, the smallest one) although as this section progresses we shall enlarge $A$
at several points. Whenever we consider $\K_p$ in this section we will be assuming that $p \in \pi(A)$ where $A$ is the
admissible ring which we have fixed at that moment.
\begin{lem}
For each admissible ring $A$ and each $p \in \pi(A)$ we have $A/pA \cong \F_p$.
Furthermore, there is a positive integer $\Pi(A)$ such that for all primes $p \geq \Pi(A)$ we have $p \in \pi(A)$.
\end{lem}
\begin{rem}
When an admissible ring is chosen we shall always identify $\overline{A/I_p}$ with $\K_p$ for $p \in \pi(A)$.
\end{rem}
\begin{proof}
By definition $A = \Z[p_1^{-1},...,p_r^{-1}]$ for primes $p_1,...,p_r \in \Z$ and so for
$p \notin\{p_1,...,p_r\}$ we have $A/pA \cong (\Z/p\Z)[p_1^{-1},...,p_r^{-1}] \cong \F_p$ since
$p_1,...,p_r$ are all invertible in $\Z/p\Z$. For the second claim notice that
$\pi(A) = \{\textnormal{primes}\}\setminus \{p_1,...,p_r\}$ and so we may take $\Pi(A) := \max \{p_1,...,p_r\}+1$.
\end{proof}

\p\label{qsnas} We aim to construct the ordinary finite $W$-algebra along with an $A$-form.
This will lead immediately to the mod $p$ reduction of the finite $W$-algebra, for each $p \in \pi(A)$.
The element $h \in \g_\Q$ induces a grading $\g_0 = \bigoplus_{i\in\Z} \g_0(i)$ where
$\g_0(i) := \{x\in \g_0 | \ad(h)x = i x\}$. Since $h\in \g_\Q$ there is a similar decomposition
$\g_\Q = \bigoplus_{i\in \Z} \g_\Q(i)$ where $\g_\Q(i) := \g_\Q \bigcap \g_0(i)$.
Let $\g_A$ be the free $A$-module on $\B$, endowed with the structure of an $A$-Lie algebra. When
$p \in \pi(A)$ we identify the restricted Lie algebra $\g_p$ with $\g_A\bigotimes_A \K_p$.
After enlarging $A$ by inverting a finite set of primes $S_1 \subseteq \Z$ we can assume that
$\g_A(i) := \g_A \bigcap \g_0(i)$ are free $A$-modules for all $i \in \Z$.

\p For $p \in \pi(A)$ we write $e_p := e\bigotimes_{A} 1_p$. Our definition of an admissible ring ensures that
$(\cdot, \cdot)_0$ is $A$-valued on $\g_A$, which allows us to define a form $(\cdot, \cdot)_p$ on $\g_p$
by setting $(b_1\bigotimes_A 1_p , b_2\bigotimes_A 1_p)_p := (b_1, b_2)\bigotimes_A 1_p$ and extending by linearity.
This also means that $\chi := (e, \cdot)_0 \in \g_0^*$ restricts to an element of $\g_A^* = \Hom_A(\g_A, A)$
so we can define $\chi_p := (e_p, \cdot)_p$.

\p\label{symplect} Consider the form $\Psi : \g_0(-1) \times \g_0(-1)\rightarrow \C; x, y \mapsto \chi[x,y]$.
The representation theory of $\sl_2$, along with the equivariance of the form $(\cdot, \cdot)_0$,
shows that $\Psi$ is symplectic so that $\dim \g_0(-1) = 2s$ for some $s$,
and we may choose a Witt basis $z_1,...,z_s, z_1',...,z_s'$ for $\g_0(-1)$ lying in $\g_\Q(-1)$
satisfying $\Psi(z_i, z_j') = \delta_{i,j}$ and $\Psi(z_i, z_j) = \Psi(z_i', z_j') = 0$.
After inverting another finite set of primes $S_2\subseteq \Z$ inside $A$ we may assume that $z_1,...,z_s, z_1',...,z_s'$
lie in $\g_A$ and span $\g_A(-1)$ as a free $A$-module. Make the notation $\g_A(-1)^0 := \bigoplus_{i=1}^s Az_i'$.
Similarly, the $\C$-span is denoted $\g_0(-1)^0$.

\p\label{centfree} Consider the centraliser $\g_0^e = \{x \in \g_0 : [x,e] = 0\}$. Now $\sl_2$-theory tells us that $\ad(e) : \g_\Q(i) \rightarrow \g_\Q(i+2)$ is
surjective for all $i \geq -1$.  After inverting finitely many primes $S_3 \subseteq \Z$ in $A$ we may assume that $\ad(e) : \g_A(i) \rightarrow \g_A(i+2)$ is
also surjective for $i \geq -1$. In this case there is a basis $x_1,...,x_r, x_{r+1},...,x_m$ for the free $A$-module $\bigoplus_{i\geq 0} \g_A(i)$
such that $x_1,...,x_r$ is a basis for the free $A$-module $\g_A^e := \g_A \bigcap \g_0^e$. We may suppose that $$x_i \in \g_0(n_i)$$ for integers $n_i \in \N$.
Since $p \in \pi(A)$ is not invertible in $A$ it is a very good prime for the root system, and so it follows from the results of \cite[\S 2]{Ja04a} along with
Lemma~\ref{goodreps} that $\g_A^e \bigotimes_{A} \K_p$ is the centraliser of $e_p$ in $\g_p$.

\p For $p \in \pi(A)$ we now define
\begin{eqnarray*}
& & \m_0 := \g_0(-1)^0 \bigoplus \sum_{i<-1} \g_0(i) ;\\
& & \m_A := \g_A(-1)^0 \bigoplus \sum_{i<-1} \g_A(i) ;\\
& & \m_p := \m_A \bigotimes_A \K_p.
\end{eqnarray*}
Before we proceed we would like to record
some basic properties of these algebras.
\begin{lem}\label{factsaboutm}
Suppose that $A$ is an admissible ring containing $S_1^{-1} \cup S_2^{-1} \cup S_3^{-1}$.
The following hold for $p \in \pi(A)$:
\begin{enumerate}
\item{$\m_p$ is $[p]$-nilpotent;}
\item{$\chi$ vanishes on $[\m_0, \m_0]$ and $[\m_A, \m_A]$;}
\item{$\chi_p$ vanishes on $[\m_p, \m_p]$ and $\m_p^{[p]}$;}
\item{$\m_p \bigcap \g_p^{e_p} = 0$;}
\item{$\dim_{\K_p} (\m_p) = \dim_\C (\m_0) = d(\chi_p) = m + s - r$.}
\end{enumerate}
\end{lem}
\begin{proof}
Since each $p\in \pi(A)$ is very good for $\Phi$ we have $\z(\g_p) = 0$. Therefore $x \in \g_p(i)$ if and only if $\ad(x) : \g_p(j) \rightarrow \g_p(i + j)$ for all $j$.
Now if $x\in \g_p(i)$ then  $\ad(x^{[p]}) = \ad(x)^p : \g_A(j) \rightarrow \g_A(pi + j)$ which means that $x^{[p]} \in \g_p(pi)$.
Now part (1) follows from $\m_p \subseteq \bigoplus_{i<0} \g_p(i)$.
The same inclusion proves part (4), given $\g_p^{e_p} \subseteq \bigoplus_{i\geq 0} \g_p(i)$.

Part (2) follows from the fact that $\chi$ is supported on $\g_0(-2)$, and similar for part (3). For part (5), the
first equality is straightforward. The second follows from \cite[\S 2]{Ja04a} along with the equality
$\dim \g_0^e = \dim \g_0(0) + \dim \g_0(1)$ which may be deduced from $\sl_2$-theory. The final equality in (5) is deduced
similarly.
\end{proof}

\p By parts (2) and (3) of the above lemma we may define a module structure for $\m_0$, $\m_A$ and $\m_p$ on
$\C$, $A$ and $\K_p$ respectively by allowing these Lie algebras to act via $\chi$ or $\chi_p$. These three rings with their module structures
shall be denoted $\C_\chi$, $A_\chi$ and $\K_{\chi,p}$ with unit elements $1_\chi$, $1_\chi$ and $1_{\chi, p}$ respectively. Define
\begin{eqnarray*}
& & Q_{\chi, 0} := U(\g_0) \bigotimes_{U(\m_0)} \C_\chi ;\\
& & Q_{\chi, A} := U(\g_A) \bigotimes_{U(\m_A)} A_\chi ;\\
& & Q_{\chi, p} := U(\g_p) \bigotimes_{U(\m_p)} \K_{\chi,p}.
\end{eqnarray*}
The first of the above three is known as the ordinary Gelfand-Graev module. For indexes $(\a, \b) \in \Z^s_{\geq0} \times \Z^m_{\geq0}$ we consider the PBW monomials
$$z^\a x^\b := z_1^{a_1} \cdots z_s^{a_s}x_1^{b_1}\cdots x_m^{b_m} \in U(\g_A) \subseteq U(\g_0).$$
By the PBW theorem the images of $z^\a x^\b$ with $(\a, \b) \in \Z^s_{\geq0} \times \Z^m_{\geq0}$ form a basis for $Q_{\chi, 0}$. By abuse of notation we denote these
images by the same symbols. The enlargement of $A$ made in (\ref{symplect}) ensures that $Q_{\chi, A}$ is a free $A$-module with basis
$\{z^\a x^\b | (\a, \b) \in \Z^s_{\geq0} \times \Z^m_{\geq0}\}$. We define an important filtration on $Q_{\chi, 0}$ by setting
$$|(\a,\b)|_e := \sum_{i=1}^s a_i + \sum_{i=1}^m (n_i + 2) b_i $$
and declaring $z^\a x^\b$ to lie in degree $|(\a, \b)|_e$.

\p We now recall the definitions of the ordinary and modular finite $W$-algebras, and write the action on the right.
\begin{defn}\label{fullyadmissible}
The ordinary finite $W$-algebra is $$U(\g_0, e) := \End_{\g_0}(Q_{\chi, 0})^{\op}.$$
The modular finite $W$-algebra is $$\wU(\g_p, e_p) := \End_{\g_p}(Q_{\chi, p})^\op.$$
We regard both of these as right endomorphism rings of their respective modules.
\end{defn}
\begin{rem}\label{discussdefns}
The notation $U(\g_0,e)$ is obviously used to draw parallels with the enveloping algebra $U(\g_0)$
however the choice of notation $\wU(\g_p, e_p)$ is less clear. The use of $\wU$ follows \cite{Pr10a}
where two modular analogues of the ordinary finite $W$-algebra were studied: the first of them is the
mod $p$ reduction of a certain $A$-form on $U(\g_0,e)$ and this is denoted $U(\g_p, e_p)$,
whilst the second analogue is $\wU(\g_p,e_p)$. According to \cite[Theorem~2.1]{Pr10a} the algebra $\wU(\g_p,e_p)$ is a split central extension of
$U(\g_p,e_p)$ by a polynomial ring. In a forthcoming joint work with Simon Goodwin \cite{GT16} we shall explain
how to define the modular analogue of $U(\g_0,e)$ without any reference to $\C$-algebras,
and develop their structure theory.
\end{rem}

\p\label{definethat} Note that $Q_{\chi, 0}$ is a cyclic $U(\g_0)$-module generated by $1\bigotimes 1_\chi$. As such every element $\Theta \in U(\g_0, e)$ is determined by $(1\bigotimes 1_\chi)\Theta$.
According to Theorem~\cite[Theorem~4.6]{Pr02} there are endomorphisms $\Theta_1,...,\Theta_r \in U(\g_0, e)$ which generate $U(\g_0,e)$ as an algebra, such that
$$(1 \bigotimes 1_\chi)\Theta_i = (x_k + \sum_{0 < |(\a, \b)|_e \leq n_k + 2} \lambda_{\a, \b}^k z^\a x^\b) \bigotimes 1_\chi$$
for scalars $\lambda_{\a,\b}^k \in \Q$, which vanish if either of the following two hold:
\begin{itemize}
\item{$|(\a,\b)|_e = n_k + 2$ and $|\a| + |\b| = 1$; or}
\item{$\a = 0$ and $b_1 = \cdots = b_r = 0$.}
\end{itemize}
Furthermore {\it loc. cit.} states that there exist polynomials $F_{i,j} \in \Q[X_1,...,X_r]$
with $1\leq i,j \leq r$ such that the only relations between $\Theta_1,...,\Theta_r$ are the commutation relations
$$[\Theta_i, \Theta_j] = F_{i,j}(\Theta_1,...,\Theta_r).$$
Once again we enlarge $A$ by inverting a finite collection $S_4 \subseteq \Z$ of prime numbers, in order to ensure that the
scalars $\lambda_{\a,\b}^k$ and the coefficients of all polynomials $F_{i,j}$ lie in $A$.
\begin{defn}
We say that an admissible ring $A$ for {\sf X} is ``fully admissible'' if the primes $S_1 \bigcup S_2 \bigcup S_3\bigcup S_4\subseteq \Z$
defined above are all invertible in $A$. Note that this definition depends upon the data $(\Phi, \O, e)$.
\end{defn}
\begin{rem}\label{reeem}
\begin{enumerate}
\item{If $A$ is fully admissible and $p\in \pi(A)$ then $\m_0$, $Q_{\chi, 0}$ and $U(\g_0,e)$ all contain $A$-forms which can be reduced modulo $p$.}
\item{The number $\min\{\Pi(A) : A \textnormal{ fully admissible for } {\sf X}\}$ is well defined, however in order to estimate this quantity we would
need to obtain precise information about the structure constants of $U(\g_0, e)$ (see \cite[\S 7]{Lo10a} for a survey of known results in type {\sf A}).}
\end{enumerate}
\end{rem}

\p We now reduce the endomorphisms $\Theta_1,...,\Theta_r$ modulo $p$ to obtain endomorphisms of $Q_{\chi, p}$.
The hypotheses which have been placed on $A$ over the course of the preceding
paragraphs ensure that $\Theta_i$ preserves $Q_{\chi, A}$ for all $i$. Following \cite[2.5]{Pr10a} we define
$$\oTheta_i := \Theta_i \bigotimes_A \K_p \in \End_{\g_p}(Q_{\chi, p})^\op.$$

\p\label{centredd} Recall from Section~\ref{reas} the $p$-centre $Z_0(\g_p)$ of the restricted Lie algebra $\g_p$. There is a natural identification
$\Spec Z_0(\g_p) \leftrightarrow (\g_p^*)^{(1)}$ and for $\eta \in \g_p^\ast$ we write $I_\eta$ for the corresponding ideal.
We consider the functors
\begin{eqnarray*}
\begin{array}{ccl} \pi_\eta  : U(\g_p)\md &\longrightarrow & U_\eta(\g_p)\md;\\
M &\longmapsto & M/I_\eta M.\end{array}
\end{eqnarray*}
In \cite{Pr10a} the notation $Q_\chi^\eta$ was used to denote $\pi_\eta(Q_{\chi, p})$. We have chosen to use the operator notation
$\pi_\eta$ as we intend to prove some properties of $\pi_\eta$ in a very general setting in the next section. We warn the reader that
we shall occasionally abuse notation by using $\pi_\eta$ to denote the projection $M \twoheadrightarrow M/I_\eta M$.

Notice that
endomorphisms of a $\g$-module $M$ also preserve $I_\eta M$ and hence they descend to $\pi_\eta(M)$.
In this way we can define endomorphisms $\theta_1^\eta,...,\theta_r^\eta \in U_\eta(\g_p,e_p) := \End_{\g_p}(\pi_\eta Q_{\chi, p})^\op$ by 
writing $\overline{1}_\eta$ for the image of $(1\bigotimes 1_\chi)$ in $\pi_\eta(Q_{\chi, p})$ and setting
$$(\overline{1}_\eta)\theta_i^\eta := (1 \bigotimes 1_\chi)\oTheta_i + I_\eta Q_{\chi, p}$$
for $i=1,...,r$. The following result was proven in \cite[Lemma~2.2]{Pr10a}, and that same proof applies here (using Lemma~\ref{factsaboutm}).
\begin{lem}\label{crggm}
The following are true for $\eta \in \chi_p + \m_p^\perp$:
\begin{enumerate}
\item{$\pi_\eta(Q_{\chi, p}) \cong U_\eta(\g_p) \bigotimes_{U_\eta(\m_p)} \K_{\chi, p}$ as $\g$-modules so $$\dim \pi_\eta(Q_{\chi, p}) = p^{\dim(\m^\perp)};$$}
\item{$\pi_\eta(Q_{\chi, p})$ is a projective generator for $U_\eta(\g_p)$ and $$U_\eta(\g_p) \cong \Mat_{D(\chi_p)} U_\eta(\g_p,e_p).$$ }
\item{the monomials $(\theta_1^\eta)^{a_1} \cdots (\theta_r^\eta)^{a_r}$ with $0 \leq a_i \leq p-1$ form a $\K_p$-basis for $U_\eta(\g_p, e_p)$.}
\end{enumerate}
\end{lem}

\p We remind the reader of the notation $\m_p^\perp := \{ f\in \g_p^\ast | f(\m_p) = 0\}$. The previous lemma leads immediately to
a description of a very nice basis for $\pi_\eta(Q_{\chi, p})$ as a free $U_\eta(\g_p, e_p)$-module.
Write $[p] := \{0,1,...,p-1\}$ and let $$\G := \{ z^\a x^\b \in Q_{\chi, p} | (\a,\b) \in [p]^s \times [p]^m, b_1 = b_2 = \cdots = b_r = 0\}$$
We call the set $\G$ ``the global basis for'' $Q_{\chi, p}$. The reason for this nomenclature is explained in the 
following result, also due to Premet \cite[Lemma~2.3]{Pr10a}. The proof requires only Lemma~\ref{crggm} above.
\begin{lem}\label{universalbasis}
For each $\eta \in \chi_p + \m_p^\perp$ we let $\pi_\eta \G$ denote the image of $\G$ in $\pi_\eta(Q_{\chi, p})$. Then $\pi_\eta(Q_{\chi,p})$
is a free right $U_\eta(\g_p, e)$-module, and $\pi_\eta\G$ is a basis. $\hfill \qed$.
\end{lem}
Notice that $|\G| = m + s - r$ which equals $d(\chi_p)$ by Lemma~\ref{factsaboutm}. In other words, $\pi_\eta Q_{\chi, p}$ is free of rank
$D(\chi_p)$ over $U_\eta(\g_p, e_p)$.

\p Fix $\eta \in \chi_p + \m_p^\perp$ and order the set $\G = \{b_1,...,b_{{D(\chi)}}\}$. By the previous lemma every $v \in \pi_\eta Q_{\chi,p}$
can be written $v = \sum_{i} \pi_\eta(b_i) c_i$ for unique coefficients $c_i \in U_\eta(\g_p,e_p)$. Therefore, if $u \in U_\eta(\g_p)$ has $u \pi_\eta(b_i) = 0$
for all $i$ then $u$ annihilates $\pi_\eta Q_{\chi, p}$. Now consider the matrix isomorphism $U_\eta(\g_p) \cong \Mat_{{D(\chi_p)}} U_\eta(\g_p,e_p)$ from
part (2) of Theorem~\ref{crggm}. The column space of $\Mat_{{D(\chi_p)}} U_\eta(\g_p,e_p)$ is a left $U_\eta(\g_p)$-module $C$, with $U_\eta(\g_p) \cong \bigoplus_{D(\chi_p)} C$.
It follows that $C$ is projective and, comparing with the decomposition $U_\eta(\g_p)\cong \bigoplus_{D(\chi_p)} \pi_\eta Q_{\chi, p}$ we conclude
that $C\cong \pi_\eta Q_{\chi,p}$ (here we make use of \cite[Corollary 6.2\& Proposition 6.3]{Pi82}). It is easy to see that the only element which
annihilates the column space is zero so we have proved the following:
\begin{cor}\label{intersectann}
The left annihilator of $\pi_\eta \G$ is null: 
\begin{eqnarray*}
\bigcap_{b\in \G} \Ann_{U_\eta(\g_p)} \pi_\eta b = 0.
\end{eqnarray*}
$\hfill \qed$
\end{cor}

\section{Modular GGG modules and their central reductions}\label{generalprops}
\setcounter{parno}{0}

\p \label{bounding} The previous section explained how to start with an ordinary finite $W$-algebra and reduce into positive characteristic.
The context of the main theorem of this paper starts with a Lie algebra in sufficiently large positive characteristic, and attaches a finite $W$-algebra to
each nilpotent orbit. We begin this section by reconciling these two vantage points. In doing so, we shall define all of the objects in the
statement of the main theorem using the constructions of the previous section. This will require us to carefully restate our notation.
Pick an indecomposable, reduced, crystallographic root system $\Phi$ and let $G_0$ be the complex, simple, simply connected algebraic
group with root system $\Phi$, Lie algebra $\g_0$ and choose a $\Z$-form $\g_\Z$. For each nilpotent orbit datum {\sf X} we
write $\O_{\sf X} \subseteq \g_0$ for the corresponding orbit and choose an element $e_{\sf X} \in \O_{\sf X}$ satisfying the
properties of Lemma~\ref{goodreps}.
\begin{defn}\label{pidef}
Let $A_{\sf X}$ be the smallest fully admissible ring for $(\Phi, \O_{\sf X}, e_{\sf X})$ and write $$\Pi({\sf X}) := \Pi(A_{\sf X}).$$
\end{defn}

\p\label{pnotes} For $p \geq \Pi({\sf X})$ we write $\K := \overline{\F}_p$ and let $G$ be the simple, simply connected algebraic group over $\K$ with Lie algebra $\g$ (these were denoted
$G_p$ and $\g_p$ in the previous section). Let $\O \subseteq \g$ denote the nilpotent $G$-orbit with label {\sf X} and denote
$e := e_{\sf X} \bigotimes_\Z 1 \in \g_Z\bigotimes_\Z \K = \g$, which lies in the $G$-orbit with Bala--Carter label {\sf X} by construction.
Recall the form $(\cdot, \cdot)$ on $\g_0$ from (\ref{formandadmiss}). Since $p \in \pi(A)$ the form reduces to a non-degenerate form on $\g$, which we also denote by $(\cdot, \cdot)$.
In order to simplify notation, and bring it into line with that of the introduction and the main theorem,
we write $e$ for $e\bigotimes 1$, we write $\chi = (e,\cdot) \in \g^\ast$, we write $\m$ for the nilpotent algebra $\m_p \subseteq \g$.
Furthermore the one dimensional representation of $\m$ afforded by $\chi$ shall be denoted $\K_\chi$. With these notations in place, we remind
the reader of our main objects of study:
\begin{eqnarray*}
Q_\chi := U(\g) \bigotimes_{U(\m)} \K_\chi;\\
\wU(\g,e) := \End_{\g}(Q_\chi)^\op.
\end{eqnarray*}

\p Following \cite{Pr10a} we study the Gelfand-Graev module $Q_\chi$ via its quotients $Q_\chi/I_\eta Q_\chi$ where $I_\eta$ is a maximal ideal of the $p$-centre.
This seems to be part of an extremely general procedure and so we begin by developing some tools in a general setting. For each ideal $I$ of $Z_0(\g)$ we
may consider the central reduction $$U_I(\g) := U(\g)/ I U(\g).$$ In the special case where $I = I_\chi$ is a maximal ideal of $Z_0(\g)$ we recover the reduced
enveloping algebra $U_\chi(\g) = U_{I_\chi}(\g)$, where $I_\chi$ is the ideal defined in formula (\ref{idealIchi}) of $\S \ref{reas}$. We shall refer to $U_I(\g)$ as ``the reduced
enveloping algebra of $I$''. We write $V(I)$ for the elements $\chi \in \g^\ast$ such that $I \subseteq I_\chi$. The purpose of this section is to prove some
elementary facts about these central reductions of $U(\g)$.
\begin{lem}\label{firstlemma}
The simple modules for $U_I(\g)$ are precisely the simple $\g$-modules with $p$-character $\chi \in V(I)$.
\end{lem}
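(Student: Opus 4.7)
The plan is to reduce this to the standard fact that every simple $U(\g)$-module has a well-defined $p$-character. First I would recall that since $U(\g)$ is a finitely generated module over its polynomial subalgebra $Z_0(\g) \subseteq Z(U(\g))$, a general form of Schur's lemma (equivalently, the Nullstellensatz for affine PI-algebras over the algebraically closed field $\K$) applies to show that $\End_{U(\g)}(M) = \K$ for every simple $U(\g)$-module $M$. Since $Z_0(\g)$ is central, it therefore acts on $M$ through a $\K$-algebra homomorphism $Z_0(\g) \to \End_{U(\g)}(M) = \K$; under the identification $\Spec Z_0(\g) \leftrightarrow (\g^\ast)^{(1)}$ recalled in \ref{settup}, this character corresponds to a unique $\chi \in \g^\ast$, which is by definition the $p$-character of $M$. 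In particular $\Ann_{Z_0(\g)}(M) = I_\chi$, the maximal ideal of equation (\ref{idealIchi}).

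Given this, the lemma follows by a straightforward bookkeeping argument. On the one hand, every simple $U_I(\g)$-module is a simple $U(\g)$-module that is annihilated by $I$, since $U_I(\g)$ is a quotient of $U(\g)$. On the other hand, a simple $U(\g)$-module $M$ with $p$-character $\chi$ factors through $U_I(\g)$ if and only if $I \cdot M = 0$, and since $\Ann_{Z_0(\g)}(M) = I_\chi$ this is equivalent to $I \subseteq I_\chi$, which is exactly the condition $\chi \in V(I)$ by definition of the vanishing locus.

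There is no real obstacle here; the only subtle point is the appeal to Schur's lemma for simple $U(\g)$-modules, which however is well documented in the literature (for instance in Jantzen's survey \cite{Ja98}, or in the discussion of reduced enveloping algebras preceding the theorem). Everything else is essentially a translation between annihilators in $Z_0(\g)$ and the geometric notion of the support $V(I) \subseteq \Spec Z_0(\g)$.
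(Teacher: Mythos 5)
Your argument is correct and essentially coincides with the paper's proof: both rest on the fact that a simple $U_I(\g)$-module is a simple $U(\g)$-module, hence $Z_0(\g)$ acts through the character attached to some $\chi$, so $IM=0$ forces $I\subseteq I_\chi$, i.e.\ $\chi\in V(I)$, and conversely $I\subseteq I_\chi$ gives $IM=0$. The only difference is cosmetic: you justify the existence of the $p$-character via Schur's lemma, while the paper invokes it as the standard Kac--Weisfeiler fact already recalled in the introduction.
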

\begin{proof}
Note that a simple $U_I(\g)$-module $M$ is simple as a $U(\g)$-module and so has a $p$-character $\chi$. Then $I_\chi$ is the unique maximal ideal which kills
$M$, and since $IM = 0$ we must have $I \subseteq I_\chi$, for otherwise we would find that $1 \in I + I_\chi$ kills $M$. 
\end{proof}

\p We study the functor of ``$I$-coinvariants''
\begin{eqnarray*}
\pi_I : U(\g)\md &\longrightarrow & U_I(\g)\md ;\\
M &\longmapsto & M/I M
\end{eqnarray*}
Note that there is a dual theory of $I$-invariants which shall not interest us here. In the special case where $I =I_\chi$ is maximal we may also use the notation $\pi_\chi := \pi_{I_\chi}$.
\begin{lem}\label{homsetc}
Let $M$ be a $U(\g)$-module and $N$ be a $U_I(\g)$-module.
\begin{enumerate} 
\item{every $U(\g)$-module map $M \rightarrow N$ factors through $M \twoheadrightarrow \pi_I(M)$;}
\item{$\pi_I$ is left adjoint to the inclusion $U_I(\g)\md \rightarrow U(\g)\md$;}
\end{enumerate}
\end{lem}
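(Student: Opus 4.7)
The plan is to deduce part (2) as a formal consequence of part (1), so the substance lies entirely in part (1), which is a straightforward ideal-chasing argument.

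For part (1), let $f : M \to N$ be any $U(\g)$-module homomorphism. I want to show $I M \subseteq \Ker(f)$. Fix $i \in I$ and $m \in M$; since $I \subseteq Z_0(\g)$ lies in $U(\g)$, the element $i \cdot m$ makes sense and $f(i \cdot m) = i \cdot f(m)$. But $f(m) \in N$, and $N$ is a module over $U_I(\g) = U(\g)/I U(\g)$, so $I$ acts as zero on $N$ and hence $i \cdot f(m) = 0$. Thus $IM \subseteq \Ker(f)$, and $f$ descends uniquely through the canonical surjection $M \twoheadrightarrow M/IM = \pi_I(M)$.

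For part (2), I will promote the factorisation in (1) to a natural bijection
\[
\Hom_{U(\g)}(M, N) \;\longrightarrow\; \Hom_{U_I(\g)}(\pi_I(M), N),
\]
functorial in both variables. Given $f$ on the left, part (1) produces a well-defined $U(\g)$-linear map $\overline f : \pi_I(M) \to N$; since $\pi_I(M)$ is itself annihilated by $I$, this $\overline f$ is automatically $U_I(\g)$-linear. Conversely, precomposing a $U_I(\g)$-map $g : \pi_I(M) \to N$ with the natural surjection $M \twoheadrightarrow \pi_I(M)$ yields a $U(\g)$-map $M \to N$. Uniqueness of the factorisation in (1) makes these two constructions mutually inverse, and naturality in $M$ and $N$ is clear from the construction. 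This is exactly the statement that $\pi_I$ is left adjoint to the inclusion.

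There is no real obstacle here: the only point worth flagging is that one must use $I \subseteq Z_0(\g)$ to know that $IM$ is a $U(\g)$-submodule of $M$ (so that $\pi_I(M)$ inherits a $U(\g)$-action, which then manifestly factors through $U_I(\g)$). This centrality is built into the setup of \ref{generalprops}, so the proof is essentially a formal verification of the universal property of the quotient.
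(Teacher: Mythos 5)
Your proposal is correct and follows essentially the same route as the paper: part (1) is the observation that $I$ annihilates $N$ so any map $M\to N$ kills $IM$ and factors through $\pi_I(M)$, and part (2) is obtained by upgrading this factorisation (together with precomposition by $M\twoheadrightarrow\pi_I(M)$) to the adjunction bijection. The paper's proof is a terser version of exactly this argument, so there is nothing to add.
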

\begin{proof}
If $\varphi: M \rightarrow N$ and $IN = 0$ then $\varphi(IM) = 0$. So $\varphi = \tilde\varphi \circ \pi_I$ where $\tilde\varphi : \pi_I(M) \rightarrow N; x + IM \mapsto \varphi(x)$, which proves (1). For (2) we must show that $\Hom_{U_I(\g)}(\pi_I M, N) \cong \Hom_{U(\g)}(M, N)$. We define a map $\Hom_{U_I(\g)}(\pi_I M, N) \rightarrow \Hom_{U(\g)}(M, N)$ by $x \mapsto x \circ \pi_I$. The injectivity is trivial whilst the surjectivity is part (1). 
\end{proof}

\p \label{mapsnots} Let $M,N$ be $U(\g)$-modules and suppose $\varphi : M \rightarrow N$. Then $\pi_I$ induces a map $\varphi_I : \pi_I(M) \rightarrow \pi_I(N)$. For $x\in M$ we have $$\varphi_I(x + I M) := \varphi(x) + I N$$ and once again we may use the notation $\varphi_\chi$ in the case where $I = I_\chi$ is maximal.
\begin{cor}\label{cokerandpi}
$\Coker(\varphi_I) \cong \pi_I(\Coker(\varphi))$.
\end{cor}
\begin{proof}
The claim follows from Lemma \ref{homsetc} along with the fact that left adjoints are right exact.
\end{proof}

\p \label{vanishingofmax} We shall need another elementary lemma.
\begin{lem}\label{maxvanishes}
Let $M$ be a finitely generated $U(\g)$-module. If $\pi_\eta M = 0$ for all $\eta \in \g^\ast$ then $M = 0$.
\end{lem}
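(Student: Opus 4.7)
The plan is to reduce the question to a Nakayama-type statement about finitely generated modules over the commutative Noetherian ring $Z_0(\g)$. The crucial input is that $U(\g)$ is a free $Z_0(\g)$-module of rank $p^{\dim \g}$ (this is the standard PBW-type description of the $p$-centre, and in particular $U(\g)$ is finitely generated as a $Z_0(\g)$-module). Hence any finitely generated $U(\g)$-module $M$ is automatically finitely generated over $Z_0(\g)$.

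Next I would translate the hypothesis. Under the identification $\Spec Z_0(\g) \leftrightarrow \g^\ast$, the condition $\pi_\eta M = 0$ says exactly that $I_\eta M = M$ for every maximal ideal $I_\eta$ of $Z_0(\g)$. So the lemma becomes: if $M$ is a finitely generated $Z_0(\g)$-module with $\mathfrak{m} M = M$ for every maximal ideal $\mathfrak{m}$, then $M = 0$.

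I would then argue by contradiction. Suppose $M \neq 0$. Since $Z_0(\g)$ is a polynomial algebra (hence Noetherian), $M$ is a Noetherian $Z_0(\g)$-module, so it admits a maximal proper $Z_0(\g)$-submodule $N$. The quotient $M/N$ is then a simple $Z_0(\g)$-module, and since the simple modules over the commutative ring $Z_0(\g)$ are exactly the residue fields $Z_0(\g)/\mathfrak{m}$ at maximal ideals, we get $M/N \cong Z_0(\g)/\mathfrak{m}$ for some maximal ideal $\mathfrak{m} = I_\eta$. Consequently $I_\eta M \subseteq N \subsetneq M$, contradicting the assumption that $I_\eta M = M$.

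There is essentially no serious obstacle: the argument is a packaging of two standard facts (finiteness of $U(\g)$ over its $p$-centre, and the maximal-submodule form of Nakayama). The only thing worth flagging is that one should not try to work with maximal $U(\g)$-submodules directly (since that would require relating simple $U(\g)$-modules to maximal ideals of $Z_0(\g)$ via Lemma~\ref{firstlemma}); it is cleaner to forget the $U(\g)$-action once $M$ has been shown to be finitely generated over $Z_0(\g)$ and run the contradiction in the category of $Z_0(\g)$-modules.
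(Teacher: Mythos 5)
Your argument is correct, but it takes a different route from the paper's. The paper proves the contrapositive inside $U(\g)\md$: a nonzero finitely generated module has (by Zorn) a maximal $U(\g)$-submodule, the simple quotient has a $p$-character $\chi$ since $Z_0(\g)$ acts by a character on every simple $\g$-module, and then Lemma~\ref{homsetc}(1) factors the projection through $\pi_\chi(M)$, forcing $\pi_\chi(M)\neq 0$. You instead forget the $U(\g)$-structure: using that $U(\g)$ is finite (indeed free of rank $p^{\dim\g}$) over $Z_0(\g)$, you observe $M$ is finitely generated over $Z_0(\g)$ and run the maximal-submodule form of Nakayama there, identifying the simple $Z_0(\g)$-quotient with a residue field $Z_0(\g)/I_\eta$. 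What your version buys is independence from the representation-theoretic inputs (no need for $p$-characters of simple modules or the adjunction lemma), at the cost of invoking finiteness of $U(\g)$ over its $p$-centre --- a fact the paper uses elsewhere anyway, so this is harmless. Two small remarks: Noetherianity is not actually needed, since a nonzero finitely generated module over any ring has a maximal proper submodule by Zorn's lemma; and your translation of the hypothesis implicitly uses that \emph{every} maximal ideal of $Z_0(\g)$ is of the form $I_\eta$ for some $\eta\in\g^\ast$, which holds because $\K$ is algebraically closed (hence perfect), i.e.\ it is exactly the identification $\Spec Z_0(\g)\leftrightarrow(\g^\ast)^{(1)}$ set up in the introduction; it would be worth saying this explicitly.
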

\begin{proof}
We prove the contrapositive. Assume $M\neq 0$. Since $M$ is finitely generated and $U(\g)$ is Noetherian $M$ has a maximal submodule $N$, hence a simple quotient $M/N$ which must have a $p$-character $\chi$. By part 1 of Lemma~\ref{homsetc} the map $M \twoheadrightarrow M/N$ factorises as $M \twoheadrightarrow \pi_\chi(M) \twoheadrightarrow M/N \neq 0$ so that $\pi_\chi(M) \neq 0$.
\end{proof}

\p Recall that we write $\m^\perp \subseteq \g^\ast$ to denote the annihilator of $\m$ in $\g^\ast$.
\begin{lem}\label{pivanishes}
We have that $\pi_\eta Q_\chi \neq 0$ if and only if $\eta \in \chi + \m^\perp$.
\end{lem}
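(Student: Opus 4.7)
The ``if'' direction is immediate from Corollary~\ref{dimslem}: when $\eta \in \chi + \m^\perp$ the dimension $p^{\dim \m^\perp}$ is strictly positive, so $\pi_\eta Q_\chi \neq 0$.

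For the converse I would prove the contrapositive. Assume $\eta \notin \chi + \m^\perp$, so that $(\eta - \chi)|_\m \neq 0$. Because $\m = \l \oplus \bigoplus_{i \leq -2} \g(i)$ is a sum of $\lambda_e$-weight spaces, we may pick a homogeneous $x \in \m$ of some weight $i \leq -1$ with $\eta(x) \neq \chi(x)$. Set $v := 1 \otimes 1 \in Q_\chi$, write $\bar v$ for its image in $\pi_\eta Q_\chi$, and note that $\pi_\eta Q_\chi = U(\g)\bar v$. The plan is to compute the action of the central element $\xi(x) = x^p - x^{[p]} \in Z_0(\g)$ on $v$ in two different ways and conclude $\bar v = 0$.

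On one hand $xv = \chi(x)v$ and so $x^p v = \chi(x)^p v$. For the $x^{[p]}$-term I would check two grading facts. First, $x^{[p]} \in \g(pi)$, and $pi \leq -p \leq -2$ forces $x^{[p]} \in \m$, so $x^{[p]} v = \chi(x^{[p]}) v$. Second, $\chi(x^{[p]}) = 0$: since $\chi = (e,\cdot)$ vanishes on $\g(j)$ for every $j \neq -2$, this is automatic except when $pi = -2$. For $i \leq -2$ this never happens, and for $i = -1$ it happens only when $p = 2$, $x \in \l$, which is precisely the case covered by the additional condition $(u^{[2]}, e) = 0$ imposed on $\l$. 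Hence $\xi(x) v = \chi(x)^p v$ in $Q_\chi$. On the other hand, in $\pi_\eta Q_\chi$ the relation $\xi(x) - \eta(x)^p \in I_\eta$ gives $\xi(x)\bar v = \eta(x)^p \bar v$. Equating the two expressions yields
$$\bigl(\eta(x)^p - \chi(x)^p\bigr)\,\bar v \;=\; \bigl(\eta(x) - \chi(x)\bigr)^p\,\bar v \;=\; 0,$$
and since $\eta(x) - \chi(x)$ is a nonzero element of the field $\K$, this scalar is invertible, forcing $\bar v = 0$ and thus $\pi_\eta Q_\chi = 0$.

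The only genuine point of care in the argument is the verification that $\chi(x^{[p]}) = 0$ for a homogeneous $x \in \m$; this is immediate by weight counting when $p > 2$, and the $p = 2$ case is exactly what the extra isotropy hypothesis in the construction of $\l$ was designed to handle.
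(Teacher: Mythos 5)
Your proof is correct and takes essentially the same approach as the paper: both evaluate the central element $\xi(x)-\eta(x)^p$ on the generator $1\otimes 1$ to conclude it is killed in $\pi_\eta Q_\chi$, forcing the quotient to vanish. The only cosmetic difference is that the paper invokes Premet's axiom $(\chi 2)$ to get $\chi(\m^{[p]})=0$, whereas you verify $\chi(x^{[p]})=0$ directly by a weight-space argument (including the $p=2$ isotropy condition on $\l$).
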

\begin{proof}
Lemma~\ref{crggm} shows that $\pi_\eta Q_\chi \neq 0$ for all $\eta \in \chi + \m^\perp$. Conversely if $\eta \notin \chi + \m^\perp$ then $\eta|_\m \neq \chi|_\m$. It follows that there exists an element $x\in \m$ such that $\eta(x) \neq \chi(x)$. Let $1\bigotimes 1$ denote the generator in $Q_\chi$. Recall from Lemma~\ref{factsaboutm} that $\chi(\m^{[p]}) = 0$ and so $(x^p - x^{[p]} - \eta(x)^p)(1\bigotimes 1) = (\chi(x)^p - \chi(x^{[p]}) - \eta(x)^p)(1\bigotimes 1) = (\chi(x) - \eta(x))^p(1\bigotimes 1)$. This means that $1\bigotimes 1 \in I_\eta Q_\chi$, and so $Q_\chi = I_\eta Q_\chi$ and $\pi_\eta Q_\chi = 0$.
\end{proof}

\p\label{filtrationgeneralities} To finish the section we provide a precise description of the annihilator of $Q_\chi$ in $Z_0(\g)$ and show that $Q_\chi$
is a free module over $Z_0(\g)$ modulo the annihilator. The proof shall require two facts regarding filtered rings and modules and we shall state these
facts in full generality, for the sake of clarity. In order to do so we introduce a little notation. Let $R$ be a $\Z$-filtered $\K$-algebra with $R_i = 0$ for $i < 0$
and $R_0 = \K$. Let $M = \bigcup_{i \in \Z_{\geq 0}} M_i$ be a filtered module over $R$. We shall consider the associated graded module
$\gr(M) = \bigoplus_{i\in \Z} \gr(M)^i$ where $\gr(M)^i := M_i/M_{i-1}$. For a submodule $N \subseteq M$ the filtration on $M$ induces filtrations on
both $N$ and $M/N$.

\begin{lem}\label{astepintherightdirection} The following hold:
\begin{enumerate}
\item{$\gr(M/N) \cong \gr(M)/\gr(N)$;}

\smallskip

\item{If $\gr(M)$ is a free module over $\gr(R)$ with a finite homogeneous basis, then $M$ is free over $R$ of the same rank.}
\end{enumerate}
\end{lem}
\begin{proof}
Part (1) follows directly from \cite[Proposition~7.6.13]{MR01} in view of the fact that $N_i := M_i \bigcap N$ and $(M/N)_i := M_i + N$.

We sketch the proof of the second part. Pick a homogeneous basis $m_1,...,m_l$ for $\gr(M)$ over $\gr(R)$ and suppose $m_i \in \gr(M)^{d_i}$.
Pick elements $n_1,...,n_l \in M$ with $n_i + M_{d_i - 1} = m_i$ and claim that $n_1,...,n_l$ is a basis for $M$ over $R$.
Suppose that there is a linear dependence $\sum r_i n_i = 0$ with $r_i \in R_{d'_i}\setminus R_{d'_{i-1}}$ and write $D:= \max \{ d_i + d'_i : i=1,...,l\}$.
Then $\sum r_i n_i + M_{D-1} = \sum_{d_i + d'_i = D} (r_i + R_{d'_i - 1})m_i = 0$ is a linear dependence in $\gr(M)$ between $m_1,...,m_l$
which shows that $r_1 = \cdots = r_l = 0$. The hypothesis that $\gr(M)$ is finitely generated implies that all of the filtered pieces are
finite dimensional over $\K$. Now check by induction that for $D \geq 0$ we have
$$\dim(\sum_{i=1}^l (\bigoplus_{j=0}^{D - d_i} \gr(R)^{j}) m_i) = \dim( \sum_{i=1}^l R_{D-d_i}n_i).$$
Since $m_1,...,m_l$ are a basis for $\gr(M)$ over $\gr(R)$ it follows that the left hand side of the equality is equal to $\dim( \bigoplus_{i=0}^D \gr(M)^i)$.
It follows from the general theory of filtered vector spaces that $\dim( \bigoplus_{i=0}^D \gr(M)^i) = \dim(M_D)$. We conclude that $n_1,...,n_l$ generate
$M$ over $R$, which completes the proof.
\end{proof}

\p We remind the reader that $Z_0(\g)$ identifies with regular functions on the Frobenius twist $(\g^\ast)^{(1)}$ and that the ideal of $Z_0(\g)$ consisting of functions which vanish on $\chi + \m^\perp$ is written $I(\m)$.
\begin{prop}\label{ImannihilatesQ}
\begin{enumerate}
\item{$I(\m) = \Ann_{Z_0(\g)}Q_\chi$;}
\smallskip
\item{$Q_\chi$ is a free $Z_0(\g)/I(\m)$-module of rank $p^{\dim(\m^\perp)}$.}
\end{enumerate}
\end{prop}
\begin{proof}
As a consequence of the definitions, $I(\m)$ is generated by elements $x^p - x^{[p]} - \chi(x)^p$ with $x \in \m$. Using the observations of the proof of Lemma~\ref{pivanishes} we see that these elements annihilate $Q_\chi$, and so $I(\m) \subseteq \Ann_{Z_0(\g)}Q_\chi$, which proves (1).

By part (1), $Q_\chi$ is a $Z_0(\g)/I(\m)$-module. To prove part (2) we consider $Q_\chi$ as a filtered $Z_0(\g)/I(\m)$-module, both filtrations induced from the PBW filtration on $U(\g)$. By part (1) of Lemma~\ref{astepintherightdirection} the associated graded algebra is $\gr (Z_0(\g) / I(\m)) = \gr Z_0(\g) / \gr I(\m) = S(\g)^p / (x^p : x\in \m)$. Similarly the associated graded GGG module is $\gr Q_\chi = \gr U(\g) / \gr U(\g)\m^\chi = S(\g)/(\m)$ where $U(\g) \m^\chi$ is the left ideal of $U(\g)$ generated by $x - \chi(x)$ with $x \in \m$. We have now shown that $\gr Q_\chi$ is a free $\gr Z_0(\g) / I(\m)$-module of rank $p^{\dim(\m^\perp)}$. Using part (2) of Lemma~\ref{astepintherightdirection} we see that $Q_\chi$ is a free $Z_0(\g)/I(\m)$-module of the same rank.
\end{proof}

\section{Proof of the main theorem}\label{proofofthe}
\setcounter{parno}{0}

\p\label{restateand1imp2} The first objective of this paper is to prove Theorem~\ref{mainthm}, which states that
\begin{enumerate}
\item[(1)]{$U_{I(\m)}(\g) \cong \bigoplus_{D(\chi)} Q_\chi$ as left $U(\g)$-modules;}
\item[(2)]{$U_{I(\m)}(\g) \cong \Mat_{{D(\chi)}}\wU(\g,e)$ as algebras.}
\end{enumerate}
provided the characteristic of the underlying field is large. First let us show that $(1)$ implies $(2)$. Let $\End^R_\g(M)$ be the right endomorphisms of a left $\g$-module $M$. It is a general fact that the right endomorphism ring of any ring, viewed as a left module over itself, is isomorphic to the ring in question. In our case we have $U_{I(\m)}(\g) \cong \End_{\g}^R(U_{I(\m)}(\g))$ where $U_{I(\m)}(\g)$ is viewed as a left $\g$-module by left multiplication. Using (1) and \cite[Corollary 3.4a]{Pi82} we have
$$\End_{\g}^R(U_{I(\m)}(\g)) \cong \End^R_\g(\bigoplus_{D(\chi)} Q_\chi) \cong \Mat_{{D(\chi)}} \End_\g^R(Q_\chi).$$ 
Since we use $\End_\g(M)$ to denote left endomorphisms of $M$ we have $\wU(\g,e) = \End_\g(Q_\chi)^\op \cong \End_\g^R(Q_\chi)$, which proves (2).

\p \label{outlineofproof} We now set ourselves to proving (1). We shall construct an explicit isomorphism between $U_{I(\m)}(\g)$ and $\bigoplus_{D(\chi)} Q_\chi$ in three steps:
\begin{enumerate}
\item[(i)]{first of all we define a $U(\g)$-module homomorphism from $U(\g)$ to $\bigoplus_{D(\chi)} Q_\chi$ and show that it is surjective by studying the quotients by centrally generated ideals;}
\item[(ii)]{next we observe that $I(\m) U(\g)$ is contained in the kernel, so that $$U_{I(\m)}(\g) \twoheadrightarrow \bigoplus_{D(\chi)} Q_\chi;$$}
\item[(iii)]{finally, we shall show that both $U_{I(\m)}(\g)$ and $\bigoplus_{D(\chi)} Q_\chi$ are free modules of the same rank over $Z_0(\g)/I(\m)$.}
\end{enumerate}
This shall complete the proof.

\p Recall from Lemma~\ref{universalbasis} that $Q_\chi$ contains a special subset $\G$ such that
$\pi_\eta Q_\chi$ is a free right $U_\eta(\g,e)$-module with basis $\pi_\eta \G$ for every $\eta \in \chi + \m^\perp$.
Order the set $\G = \{b_1,...,b_{D(\chi)}\}$ and for $i = 1,...,{D(\chi)}$ we let $Q_\chi[i]$ denote a copy of $Q_\chi$
with label $i$, and let $\psi_i : Q_\chi \rightarrow Q_\chi[i]$ be the identity mapping. Set $c_i := \psi_i(b_i)$. Now we define
\begin{eqnarray*}
\varphi : U(\g) & \longrightarrow & \bigoplus_{i=1}^{D(\chi)} Q_\chi[i]\\
u &\longmapsto & \sum_{i} u c_i
\end{eqnarray*}

\p We proceed with part (i) of the proof outlined in paragraph (\ref{outlineofproof}). In what follows we use the notation $\varphi_\eta$ with $\eta\in \g^\ast$ introduced in paragraph (\ref{mapsnots}). 
\begin{prop}
The map $\varphi$ is surjective.
\end{prop}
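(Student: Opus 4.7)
The plan is to show $\varphi$ is surjective by proving $\Coker(\varphi) = 0$, reducing this to a family of statements about the induced maps $\varphi_\eta$ at every point $\eta \in \g^\ast$, and then using the universal-basis corollary to handle each such $\eta$.

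First I would observe that the target $\bigoplus_{i=1}^{D(\chi)} Q_\chi[i]$ is finitely generated over $U(\g)$, being generated by the $D(\chi)$ vectors $\psi_i(1\otimes 1)$; hence $\Coker(\varphi)$ is also finitely generated. By Lemma \ref{maxvanishes} it suffices to show $\pi_\eta \Coker(\varphi) = 0$ for every $\eta \in \g^\ast$, and by Lemma \ref{cokerandpi} this is equivalent to surjectivity of the induced map $\varphi_\eta : U_\eta(\g) \to \bigoplus_{i=1}^{D(\chi)} \pi_\eta Q_\chi[i]$.

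Now I would split into two cases according to whether $\eta$ lies in the Duflo-type sheet $\chi + \m^\perp$. If $\eta \notin \chi + \m^\perp$, then Lemma \ref{pivanishes} gives $\pi_\eta Q_\chi = 0$, so the codomain of $\varphi_\eta$ vanishes and surjectivity is trivial. If $\eta \in \chi + \m^\perp$, the plan is to argue instead that $\varphi_\eta$ is injective between finite-dimensional vector spaces of equal dimension, which yields surjectivity. Equality of dimensions follows from Theorem \ref{matrixiso}(3), which gives $\dim U_\eta(\g) = D(\chi)\cdot \dim \pi_\eta Q_\chi$, matching the codomain dimension.

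For injectivity, the definition of $\varphi_\eta$ (sending $u + I_\eta U(\g)$ to $\sum_i u\,\pi_\eta(c_i)$) immediately gives
\[
\ker(\varphi_\eta) \;=\; \bigcap_{i=1}^{D(\chi)} \Ann_{U_\eta(\g)}\bigl(\pi_\eta(b_i)\bigr) \;=\; \bigcap_{b \in \Bp} \Ann_{U_\eta(\g)}\bigl(\pi_\eta(b)\bigr),
\]
which is exactly the intersection shown to be zero in Corollary \ref{intersectann}. Hence $\varphi_\eta$ is injective, therefore surjective, for every $\eta \in \chi + \m^\perp$, completing the proof.

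The only substantive ingredient is Corollary \ref{intersectann}, so the main work has already been done in Section \ref{universalbasis}; the proposition itself is then a routine assembly of the central-reduction machinery of Section \ref{centralreductions} together with that corollary, and I do not foresee any genuine obstacle.
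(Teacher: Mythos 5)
Your proof is correct and takes essentially the same route as the paper's: reduce to $\Coker(\varphi)=0$ via Lemmas \ref{cokerandpi} and \ref{maxvanishes}, handle $\eta \notin \chi + \m^\perp$ by Lemma \ref{pivanishes}, and for $\eta \in \chi + \m^\perp$ deduce surjectivity of $\varphi_\eta$ from injectivity (Corollary \ref{intersectann}) plus an equality of dimensions. The only cosmetic differences are that you draw the dimension count from Theorem \ref{matrixiso}(3) where the paper uses Corollary \ref{dimslem} together with $\dim(\m)=d(\chi)$, and you explicitly note the finite generation of the cokernel needed for Lemma \ref{maxvanishes}, which the paper leaves implicit.
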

\begin{proof}
We shall show that $\Coker(\varphi) = 0$. First of all we fix $\eta \in \chi + \m^\perp$. The space $\Ker(\varphi_\eta)$ is the set
of elements $u\in U_\eta(\g)$ which simultaneously annihilate every element of $\pi_\eta \G$, and by Corollary \ref{intersectann}
this set is zero. It is well-known that $\dim U_\eta(\g) = p^{\dim(\g)}$ whilst, by part (1) of Lemma~\ref{crggm} and part (5) of
Lemma~\ref{factsaboutm}, we have $\dim \pi_\eta(\bigoplus_{D(\chi)} Q_\chi) = p^{\dim(\g)}$. This ensures that $\varphi_\eta$
is actually an isomorphism. In particular it is surjective. Now taking $\eta \notin \chi + \m^\perp$ we have
$\pi_\eta ( \bigoplus_{D(\chi)} Q_\chi) = 0$ by Lemma~\ref{pivanishes}, so that $\varphi_\eta$ is trivially surjective. What transpires
is that $\Coker(\varphi_\eta) = 0$ for all $\eta \in \g^\ast$ and, by Corollary~\ref{cokerandpi}, we see that $\pi_\eta(\Coker(\varphi)) = 0$
for all such $\eta$. Now apply Lemma~\ref{maxvanishes} to see that $\Coker(\varphi) = 0$ as desired. 
\end{proof}
\begin{rem}
It would be rather convenient if we could complete the proof of the theorem by showing that $\Ker(\varphi) = 0$ using a similar reasoning as in the proposition. Unfortunately this fails as we do not have $\pi_\eta\Ker(\varphi) = \Ker(\varphi_\eta)$ in general. This is due to the fact that taking coinvariants with respect to $I_\eta$ is right exact but not left exact (Cf. Lemma \ref{homsetc}).
\end{rem}

\p We shall complete the proof by showing that $I(\m) U(\g) = \Ker(\varphi)$. The first thing to notice is that, by Proposition~\ref{ImannihilatesQ}, the ideal $I(\m)$ kills $Q_\chi$ and so also kills each $c_i$. It follows that $I(\m) U(\g) \subseteq \Ker(\varphi)$. This constitutes (ii) in the outline of the proof (\ref{outlineofproof}). To complete the proof we shall show that $U_{I(\m)}(\g)$ and $\bigoplus_{D(\chi)} Q_\chi$ are free modules of the same rank over $Z_0(\g)/I(\m)$.
\begin{lem}
\begin{enumerate}
\item{$U_{I(\m)}(\g)$ is a free module of rank $p^{\dim(\g)}$ over $Z_0(\g)/I(\m)$;}
\item{$\bigoplus_{D(\chi)} Q_\chi$ is a free module of rank $p^{\dim(\g)}$ over $Z_0(\g)/I(\m)$.}
\end{enumerate}
\end{lem}
\begin{proof}
Part (1) follows from the well-known fact that $U(\g)$ is a free module of rank $p^{\dim(\g)}$ over $Z_0(\g)$. Part (2) follows from Proposition \ref{ImannihilatesQ} in view of the equality $p^{\dim(\m^\perp)} {D(\chi)} = p^{\dim(\g)}$ which, in turn, follows from part (5) Lemma~\ref{factsaboutm}.
\end{proof}

\p Now $\varphi$ induces a surjective map $$U_{I(\m)}(\g) \overset{\tilde{\varphi}}{\twoheadrightarrow} \bigoplus_{D(\chi)} Q_\chi$$ and $\bigoplus_{D(\chi)} Q_\chi \cong U_{I(\m)}(\g)/\Ker(\tilde{\varphi})$. However $Z_0(\g)/I(\m)$ is a polynomial algebra (being isomorphic to $S(\g)^p/(x^p - \chi(x)^p : x \in \m)$) and so if a proper quotient of any free module over $Z_0(\g)/I(\m)$ is also free, then it necessarily has smaller rank. We deduce that $\Ker(\tilde{\varphi}) = 0$ and part (1) of the main theorem follows. In paragraph (\ref{restateand1imp2}) we showed that $(1)$ implies $(2)$ and so we are done. $\hfill\qed$

\p Before we continue we shall state a corollary of the main theorem. Although the following argument is well-known, we record it here for the reader's convenience.
\begin{cor}\label{maincor}
$Q_\chi$ is a projective generator for $U_{I(\m)}(\g)$-mod.
\end{cor}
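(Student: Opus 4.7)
The corollary follows almost immediately from Theorem \ref{mainthm}, so my plan would be to deduce both properties directly from the two parts of the theorem, treating them as consequences of standard Morita-theoretic formalism.

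First I would verify that $Q_\chi$ is genuinely an $U_{I(\m)}(\g)$-module. Part (1) of the theorem identifies $U_{I(\m)}(\g)$ with $\bigoplus_{D(\chi)} Q_\chi$ as left $U(\g)$-modules, and the left-hand side is by construction annihilated by $I(\m) U(\g)$. Hence each summand $Q_\chi$ is annihilated by $I(\m) U(\g)$ and so naturally carries the structure of an $U_{I(\m)}(\g)$-module.

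Next, projectivity: since $U_{I(\m)}(\g)$ is a free module of rank $1$ over itself, and $Q_\chi$ appears as a direct summand in the decomposition of part (1), $Q_\chi$ is a direct summand of a free $U_{I(\m)}(\g)$-module, and hence projective. To see that it is a generator, let $M$ be any $U_{I(\m)}(\g)$-module. Choose an epimorphism $\bigoplus_{i\in I} U_{I(\m)}(\g) \twoheadrightarrow M$; substituting part (1) gives an epimorphism $\bigoplus_{i \in I \times D(\chi)} Q_\chi \twoheadrightarrow M$, showing that $Q_\chi$ generates $U_{I(\m)}(\g)\md$.

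Alternatively, part (2) of the theorem renders this a one-line appeal to Morita theory: the isomorphism $U_{I(\m)}(\g)\cong \Mat_{D(\chi)} \widehat{U}(\g,e)$ combined with $\widehat{U}(\g,e) = \End_\g(Q_\chi)^{\op}$ identifies $Q_\chi$ with the standard progenerator for the matrix algebra, which is automatically a projective generator. Either route is routine given Theorem \ref{mainthm}; there is no real obstacle here, since the genuine work is absorbed into the proof of the theorem itself. The only subtlety worth remarking on is the first step (confirming the $U_{I(\m)}(\g)$-module structure on $Q_\chi$), and this is settled purely by inspection of the decomposition in part (1).
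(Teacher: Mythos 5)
Your main argument is correct and is essentially the paper's own proof: both deduce everything from part (1) of Theorem \ref{mainthm}, which exhibits $Q_\chi$ as a direct summand of the free module $U_{I(\m)}(\g)$ (hence projective) and exhibits free modules as direct sums of copies of $Q_\chi$ (hence generator); the paper merely phrases the generator condition via finitely generated projectives being summands of finite sums of copies of $Q_\chi$ (citing Bourbaki), which is equivalent to your ``every module is an epimorphic image of a direct sum of copies'' formulation. The only caveat is your sketched alternative via part (2), which is not quite immediate as stated because one must still identify $Q_\chi$ with the column space of $\Mat_{D(\chi)}\widehat{U}(\g,e)$ --- a point the paper only settles later, in Corollary \ref{smellycor} --- but your primary route does not rely on it.
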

\begin{proof}
Since $U(\g)$ is Noetherian so too is the quotient $U_{I(\m)}(\g)$. A projective generator is a projective $U_{I(\m)}(\g)$-module $Q$ such that every finitely generated projective module $P$ occurs as a direct summand of $\bigoplus_N Q$ for some $N > 0$ depending on $P$. According to \cite[\S 2, No. 8, Lemma 8 (iii)]{Bo72} there is an exact sequence $F_1 \rightarrow F_2 \rightarrow P$ where $F_1, F_2$ are free modules of finite rank and so, since projectives admit no non-trivial extensions, $P$ is a direct summand of $F_2 = \bigoplus_{M} U_{I(\m)}(\g) \cong  \bigoplus_{M D(\chi)} Q_\chi$ for some $M \in \Z_{> 0}$.
\end{proof}

\section{Skryabin's equivalence}\label{Skryabins}
\setcounter{parno}{0}

\p \label{skryequiv1}With the proof of the main theorem complete we take a moment to explicitly spell out the modular version of Skryabin's equivalence,
which follows immediately from the main theorem. We define $\Phi$, {\sf X} and $\Pi(X)$ according to (\ref{bounding}).
Then we choose a prime $p \geq \Pi({\sf X})$ and let $G, \g, e, \m, Q_\chi$ and $\wU(\g,e)$ have the same meaning as in (\ref{pnotes}). 
We define the category of ``$p$-Whittaker modules'' $\Wh_p(\g, \chi)$ to be the full subcategory of $U(\g)$-mod consisting of all
modules where $I(\m)$ acts trivially. Equivalently these are the modules where $I(\m) U(\g)$ acts trivially, and so $\Wh_p(\g, \chi)$ is just
notation for $U_{I(\m)}(\g)\md$. Then the main theorem subtends the following modular version of Skryabin's equivalence \cite[Appendix]{Pr02}:
\begin{thm}\label{Skry}
There is an equivalence of categories $\wU(\g,e)\md$ and $\Wh_p(\g, \chi)$ which is expressed by the functor $$V \longmapsto Q_\chi \bigotimes_{\wU(\g,e)} V$$
and the quasi-inverse $$i W \longmapsfrom W$$
where $i \in \Mat_{{D(\chi)}} \wU(\g,e)$ denotes the idempotent which has $1$ in the $(1,1)$-entry and a zero in all other entries.
\end{thm}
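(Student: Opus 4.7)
The statement is a direct application of classical Morita theory once the main theorem is in hand. By Corollary \ref{maincor}, $Q_\chi$ is a projective generator for $U_{I(\m)}(\g)\md$, and by definition $\widehat{U}(\g,e) = \End_\g(Q_\chi)^\op$. Standard Morita theory (see for instance \cite[Chapter 6]{Pi82}) therefore produces a pair of quasi-inverse equivalences
\[
\Wh_p(\g,\chi) = U_{I(\m)}(\g)\md \;\simeq\; \widehat{U}(\g,e)\md
\]
realised by $V \mapsto Q_\chi \otimes_{\widehat{U}(\g,e)} V$ and $W \mapsto \Hom_{U_{I(\m)}(\g)}(Q_\chi, W)$. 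That the tensor functor lands in $\Wh_p(\g,\chi)$ is immediate from Lemma \ref{ImannihilatesQ}, since $I(\m)$ annihilates $Q_\chi$ and hence any tensor product over $\widehat{U}(\g,e)$ with it.

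It remains to rewrite the quasi-inverse in the form $W \mapsto iW$. Using the algebra isomorphism $U_{I(\m)}(\g) \cong \Mat_{D(\chi)} \widehat{U}(\g,e)$ of Theorem \ref{mainthm}(2), the element $i$ transports to a primitive idempotent of $U_{I(\m)}(\g)$; the left ideal $U_{I(\m)}(\g)\cdot i$ is then a summand in the decomposition of Theorem \ref{mainthm}(1) and so is isomorphic to $Q_\chi$ as a left $U_{I(\m)}(\g)$-module, while the corner ring $iU_{I(\m)}(\g)i$ coincides with the $(1,1)$-entry $\widehat{U}(\g,e)$ of the matrix algebra. The universal isomorphism $\Hom_{U_{I(\m)}(\g)}(U_{I(\m)}(\g)\cdot i, W) \cong iW$, $\phi \mapsto \phi(i)$, then yields the desired description of the quasi-inverse.

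The principal point to check is purely bookkeeping: one must verify that the isomorphism $Q_\chi \cong U_{I(\m)}(\g)\cdot i$ respects the right $\widehat{U}(\g,e)$-actions, namely that right multiplication by $iU_{I(\m)}(\g)i \cong \widehat{U}(\g,e)$ on $U_{I(\m)}(\g)\cdot i$ coincides with the tautological right action on $Q_\chi$ inherited from $\widehat{U}(\g,e) = \End_\g(Q_\chi)^\op$. This compatibility is built into the proof of the main theorem and introduces no new ideas; so in summary, the real content of Skryabin's equivalence is already packed into Theorem \ref{mainthm}.
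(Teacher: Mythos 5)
Your argument is correct and is essentially the paper's own proof: the paper simply cites the standard Morita/corner-ring theory of \cite[5.6 \& 5.7]{MR01} applied to Theorem \ref{mainthm}, and your write-up (progenerator $Q_\chi$, equivalence via $Q_\chi \otimes_{\wU(\g,e)} -$ and $\Hom_{U_{I(\m)}(\g)}(Q_\chi,-)$, then the identification $\Hom_{U_{I(\m)}(\g)}(U_{I(\m)}(\g)i,W)\cong iW$ with $U_{I(\m)}(\g)i\cong Q_\chi$ coming from the decomposition in part (1)) is just that classical argument spelled out. The bimodule compatibility you flag at the end is indeed automatic for the matrix isomorphism constructed in the proof of the main theorem, since the matrix units there arise from the decomposition $U_{I(\m)}(\g)\cong\bigoplus_{D(\chi)}Q_\chi$ itself.
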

\noindent The proof may be gathered from \cite[5.6 \& 5.7]{MR01}.

\p We would like to make a comparison between the category of $p$-Whittaker modules, and the category of generalised Whittaker modules. In parallel with the terminology over $\C$ we define the category $\Wh(\g,\chi)$ of ``generalised Whittaker modules'' to be the full subcategory of $U(\g)\md$ consisting of modules where $x - \chi(x)$ acts locally nilpotently for all $x\in \m$.
\begin{prop}
$\Wh_p(\g,\chi)$ is a full subcategory of $\Wh(\g,\chi)$.
\end{prop}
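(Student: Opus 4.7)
The plan is to show that every object of $\Wh_p(\g,\chi)$ is also an object of $\Wh(\g,\chi)$; since both are defined as full subcategories of $U(\g)\md$, object-wise containment is all that is required. Fix $M \in \Wh_p(\g,\chi)$ and $x \in \m$; I will in fact prove that $(x-\chi(x))^{p^N} = 0$ on $M$ for $N$ sufficiently large, which is strictly stronger than the local nilpotency demanded by $\Wh(\g,\chi)$.

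The first step is to extract a key identity on $M$. By the proof of Lemma~\ref{ImannihilatesQ}, $I(\m)$ is generated by the elements $\xi(x) - \chi(x)^p = x^p - x^{[p]} - \chi(x)^p$ for $x\in\m$, and by the definition of $\Wh_p(\g,\chi)$ these act as zero on $M$. Since $\chi(x)$ is a scalar we have $(x-\chi(x))^p = x^p - \chi(x)^p$, which combined with the previous relation gives
\begin{equation*}
(x-\chi(x))^p \;=\; x^{[p]} \qquad \text{as operators on } M.
\end{equation*}

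The second step is to iterate this relation. The subalgebra $\m$ is restricted (as one checks directly from its grading-theoretic definition $\m = \l \oplus \bigoplus_{i<-1}\g(i)$), and Premet's axiom $(\chi 2)$, already invoked in the proof of Lemma~\ref{pivanishes}, gives $\chi(\m^{[p]}) = 0$. Consequently, for every $k \geq 1$ the element $x^{[p]^k}$ again belongs to $\m$ with $\chi(x^{[p]^k}) = 0$, so applying step one to $x^{[p]^k}$ in place of $x$ and composing yields, by an easy induction,
\begin{equation*}
(x-\chi(x))^{p^k} \;=\; x^{[p]^k} \qquad \text{on } M, \quad k \geq 1.
\end{equation*}

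The third step is to invoke $[p]$-nilpotency of elements of $\m$: since $\m \subseteq \bigoplus_{i<0}\g(i)$, every $x \in \m$ is $\ad$-nilpotent, and in a reductive Lie algebra in good characteristic $\ad$-nilpotency is equivalent to $[p]$-nilpotency. Hence $x^{[p]^N} = 0$ for $N$ large, and the identity of step two gives $(x-\chi(x))^{p^N} = 0$ on all of $M$, completing the proof. I anticipate no real obstacle here: the only substantive input beyond what has already been established in the paper is the standard equivalence of $\ad$-nilpotency and $[p]$-nilpotency in the reductive setting, and the mildly delicate point is simply to keep track that $(\chi 2)$ supplies both $\m^{[p]} \subseteq \m$ and the vanishing of $\chi$ on $\m^{[p]}$ needed to run the induction.
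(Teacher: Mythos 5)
Your argument follows essentially the same route as the paper's own proof: establish by induction, using $\chi(\m^{[p]})=0$ (Premet's axiom $(\chi 2)$) and the fact that the generators $x^p-x^{[p]}-\chi(x)^p$ of $I(\m)$ kill $M$, that $(x-\chi(x))^{p^k} = x^{[p]^k}$ as operators on $M$, and then conclude from the $[p]$-nilpotency of elements of $\m$. The one flaw is in your third step: it is \emph{not} true that in a reductive Lie algebra in good characteristic $\ad$-nilpotency is equivalent to $[p]$-nilpotency. Any nonzero element of $\z(\g)$ is $\ad$-nilpotent, but e.g.\ the identity matrix in $\gl_n$ is toral, so the equivalence only holds modulo the centre; from $\ad(x)^{p^N}=\ad(x^{[p]^N})=0$ you can only conclude $x^{[p]^N}\in\z(\g)$. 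The gap is easily closed for your $x\in\m$: since $\lambda_e$ fixes the centre we have $\z(\g)\subseteq\g(0)$, whereas $x^{[p]^N}\in\m\subseteq\bigoplus_{i\leq -1}\g(i)$ (using that $\m$ is restricted, as you noted), so $x^{[p]^N}=0$. More directly still, the grading alone gives $\m^{[p]^N}\subseteq\bigoplus_{i\leq -p^N}\g(i)=0$ for $N\gg 0$; this $[p]$-nilpotency of $\m$ is exactly Premet's axiom $(\chi 1)$, which is what the paper cites at this point instead of arguing through $\ad$-nilpotency.
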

\begin{proof}
Fix $x\in \m$. We must show that $x - \chi(x)$ acts locally nilpotently on every $p$-Whittaker module $V$. We shall prove that, in fact, $x - \chi(x)$ acts nilpotently on $V$ and the nilpotence degree is bounded above independently of $V$. To this end, we show that $(x - \chi(x))^{p^i} \in I(\m)$ for some $i > 0$ depending on $x$. We begin by claiming that $(x - \chi(x))^{p^i} \equiv x^{[p]^i} \mod I(\m)$ for $i > 0$. The proof is by induction. The case $i = 1$ follows straight from the fact that $(x - \chi(x))^p - x^{[p]} = x^p - x^{[p]} - \chi(x)^p \in I(\m)$. Now for the inductive step we make the following calculation modulo $I(\m)$
\begin{eqnarray*}
(x-\chi(x))^{p^{i+1}} &=& ((x-\chi(x))^{p^i})^{p} \\
&\overset{(\ast)}{=}& (x^{[p]^i})^p \\
&\overset{(\ast \ast)}{=}& (x^{[p]^i})^{[p]} + \chi(x^{[p]^i})^p \\
&\overset{(\ast \ast \ast)}{=}& x^{[p]^{i+1}}
\end{eqnarray*}
where $(\ast)$ follows from the inductive hypothesis, $(\ast \ast )$ holds modulo $I(\m)$ and $(\ast \ast \ast)$ follows from part (3) of Lemma~\ref{factsaboutm}. Now the lemma follows from part (1) of the same lemma, which states that $\m^{[p]^i} = 0$ for $i$ sufficiently large.
\end{proof}

\p Before we continue we would like to record a corollary which depends upon the observations of the current section. The isomorphism $U_{I(\m)}(\g) \cong \Mat_{{D(\chi)}} \wU(\g,e)$ subtends a decomposition $$U_{I(\m)}(\g) \cong \bigoplus_{D(\chi)} C$$ where $C$ denotes the column space of $\Mat_{{D(\chi)}} \wU(\g,e)$.
\begin{cor}\label{smellycor}
$C \cong Q_\chi$ as left $U_{I(\m)}(\g)$-modules. In particular $Q_\chi$ is a free right $\wU(\g,e)$-module of rank $D(\chi)$.
\end{cor}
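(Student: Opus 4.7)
The plan is to repeat, for the full central reduction $U_{I(\m)}(\g)$, the cancellation argument used in Section \ref{universalbasis} to establish Corollary \ref{intersectann}. Under the matrix isomorphism of Theorem \ref{mainthm}(2), the column space $C$ of $\Mat_{D(\chi)}\wU(\g,e)$ is a projective left $U_{I(\m)}(\g)$-module satisfying $U_{I(\m)}(\g) \cong \bigoplus_{D(\chi)} C$. Setting this alongside the decomposition $U_{I(\m)}(\g) \cong \bigoplus_{D(\chi)} Q_\chi$ supplied by Theorem \ref{mainthm}(1), one can apply \cite[Corollary 6.2 \& Proposition 6.3]{Pi82} exactly as in the paragraph preceding Corollary \ref{intersectann} to deduce $C \cong Q_\chi$ as left $U_{I(\m)}(\g)$-modules.

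For the ``in particular'' assertion, I would observe that $C$ is canonically a free right $\wU(\g,e)$-module of rank $D(\chi)$ via the matrix column structure, and that this right action identifies $\wU(\g,e)^{\op}$ with $\End_{U_{I(\m)}(\g)}(C)$; by the very definition of $\wU(\g,e)$, the ring $\wU(\g,e)^{\op} = \End_{U_{I(\m)}(\g)}(Q_\chi)$ realises the natural right $\wU(\g,e)$-structure on $Q_\chi$. A left-module isomorphism $\phi : C \to Q_\chi$ therefore intertwines these two right actions up to a ring automorphism $\sigma$ of $\wU(\g,e)$, in the sense that $\phi(c\cdot b) = \phi(c)\cdot \sigma(b)$. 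Since $\sigma$ is a bijection on $\wU(\g,e)$, a free right $\wU(\g,e)$-basis of $C$ of cardinality $D(\chi)$ is transported by $\phi$ to a free right $\wU(\g,e)$-basis of $Q_\chi$ of the same cardinality.

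The only minor subtlety is that $\phi$ need not a priori be a bimodule isomorphism; I would circumvent this by noting that freeness of a given finite rank is manifestly preserved under twisting the coefficient ring by an automorphism, so this potential mismatch costs nothing.
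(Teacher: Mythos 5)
The first step of your argument contains a genuine gap. The appeal to \cite[Corollary 6.2 \& Proposition 6.3]{Pi82} is legitimate in the paragraph preceding Corollary \ref{intersectann} because there the modules are modules over the \emph{finite dimensional} algebra $U_\eta(\g)$, where Krull--Schmidt holds and direct summands may be cancelled. Here, however, $C$ and $Q_\chi$ are modules over $U_{I(\m)}(\g)$, which is infinite dimensional (it is free of rank $p^{\dim(\g)}$ over the polynomial algebra $Z_0(\g)/I(\m)$), and for such rings the implication $\bigoplus_{D(\chi)} C \cong \bigoplus_{D(\chi)} Q_\chi \Rightarrow C \cong Q_\chi$ is not automatic: uniqueness of direct sum decompositions, and even ``$n$-th root'' cancellation, fail in general for projective modules (indeed for progenerators) over non-artinian rings --- already over a Dedekind domain an ideal $I$ whose class has order two satisfies $I \oplus I \cong R \oplus R$ while $I \not\cong R$. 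So the two decompositions of $U_{I(\m)}(\g)$ furnished by Theorem \ref{mainthm} do not by themselves yield $C \cong Q_\chi$; further input is needed. The paper supplies it via Skryabin's equivalence (Theorem \ref{Skry}): writing $C = U_{I(\m)}(\g)\iota$ for the idempotent $\iota$, the quasi-inverse functor sends $C$ to $\iota C = \iota U_{I(\m)}(\g)\iota \cong \wU(\g,e)$, the regular module, whence $C \cong Q_\chi \otimes_{\wU(\g,e)} \wU(\g,e) \cong Q_\chi$. You would need to argue along these (or comparable) lines rather than by cancellation.

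Your treatment of the ``in particular'' clause is sound, and in fact more careful than the paper's own remark: noting that a left $U_{I(\m)}(\g)$-module isomorphism $C \to Q_\chi$ intertwines the two right $\wU(\g,e)$-actions only up to a ring automorphism, and that freeness of a given finite rank is unaffected by such a twist, correctly fills in what the paper leaves implicit. But this does not repair the first step, on which the whole statement rests.
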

\begin{proof}
We adopt the notation of Theorem \ref{Skry}. The column space is $C = U_{I(\m)}(\g) i$. By Skryabin's equivalence we have $C \cong Q_\chi \bigotimes_{\wU(\g,e)} i C$, however $i C = i U_{I(\m)}(\g) i \cong \wU(\g,e)$ as a $\wU(\g,e)$-bimodule. Therefore $Q_\chi \bigotimes_{\wU(\g,e)} i C \cong Q_\chi$ as required. To see the final remark it suffices to observe that, quite generally, for any ring $R$, the right endomorphisms of the column space of $\Mat_{D(\chi)} (R)$ are isomorphic to $R$ acting by right multiplication, and that the column space is a free module for this action of rank $D(\chi)$
\end{proof}

\section{The $p$-characters of $\wU(\g,e)$-modules}\label{pcharofmods}
\setcounter{parno}{0}

\p\label{skryabinsequiv} We now recall an important theorem regarding the ordinary finite $W$-algebra. Recall from (\ref{bounding}) that $G_0$
is a simple, simply connected complex group and let $e$ be some nilpotent element of $\g_0$. There is a non-degenerate $G_0$-equivariant
isomorphism $\g_0 \leftrightarrow \g^\ast_0$ and suppose $e$ identifies with $\chi$. The subalgebra $\m_0$ and the module $Q_{\chi, 0}$ was
constructed in Section~\ref{redmodp}, and $U(\g_0, e)$ is the opposite endomorphism ring of $Q_{\chi, 0}$. Skryabin's
equivalence asserts that
\begin{eqnarray*}
U(\g_0, e)\md &\longrightarrow& \Wh(\g_0, \chi) \\
V &\longmapsto & Q_{\chi, 0} \bigotimes_{U(\g_0, e)} V
\end{eqnarray*}
is part of a quasi-inverse equivalence of categories. Here $\Wh(\g_0, \chi)$ has the same meaning as per the previous section: it is the full subcategory of $U(\g_0)\md$ consisting of all modules upon which $x - \chi(x)$ acts locally nilpotently for all $x\in \m_0$. For each simple $U(\g_0, e)$-module $V$ the $U(\g_0)$-module $Q_{\chi, 0} \bigotimes_{U(\g_0, e)} V$ is also simple, and the annihilator is denoted $I_V$. The associated variety of a primitive ideal $I$ is the vanishing locus in $\g_0$ of the graded ideal $\gr I \unlhd S(\g_0)$ after the identification $S(\g_0) \leftrightarrow \C[\g_0]$ (see \cite[\S 9]{Ja04a} for a survey). The first part of the following was proven by Premet, whilst the second was proven independently by Ginzburg, Losev, Premet at roughly the same time \cite{Gi09}, \cite{Lo10b}, \cite{Pr10a}.
\begin{thm}\label{PGL}
For all simple $U(\g_0, e)$-modules $V$ the associated variety of $I_V$ contains $\Ad(G_0)e$. Furthermore all primitive ideals with associated variety containing $\overline{\Ad(G_0)e}$ are obtained in this manner.
\end{thm}

\p\label{remarksonpchars} Return to the notation $G$, $\g$, $e$, $Q_\chi$, etc, instantiated in (\ref{pnotes}). Our goal in this section is to show that the simple $U(\g)$-modules $$\{Q_\chi \bigotimes_{\wU(\g,e)} V : V \textnormal{ a simple } \wU(\g,e)\textnormal{-module}\}$$ can be distinguished by an invariant, in parallel with the above theorem. The above set coincides with the set of simple $U_{I(\m)}(\g)$-modules and by Lemma \ref{firstlemma} these are precisely the simple $U(\g)$-modules with $p$-character $\eta \in \chi + \m^\perp$.

\p We recall the definition of a decomposition class in $\g^\ast$. These are the equivalence classes under the following equivalence relation:
we say $\chi \sim \psi$ if there exists $g \in G$ such that $\Ad(g)\g_{\chi_s} = \g_{\psi_s}$ and $\Ad^*(g)\chi_n = \psi_n$ where $\chi = \chi_s + \chi_n$
and $\psi = \psi_s + \psi_n$ are the Jordan decompositions. Note that the classes are stable under scalar multiplcation. We direct the reader to \cite{PS15} for a
good overview of the theory of decomposition classes in positive characteristics. The rest of the section shall be spent proving the following theorem.
\begin{thm}\label{invariantsofsimples}
Continue to assume that $p \geq \Pi({\sf X})$ so that Theorem~\ref{Skry} holds. Write $\O^\ast = \Ad^\ast(G)\chi$. For all simple $\wU(\g, e)$-modules $V$ the closure of the decomposition class $\D(\eta)$ of the $p$-character $\eta$ of $Q_\chi \bigotimes_{\wU(\g,e)} V$ contains $\O^*$. Furthermore all simple $U(\g)$-modules with $\O^*$ contained in the closure of the decomposition class of the $p$-character are obtained by twisting simple modules of the form $Q_\chi \bigotimes_{\wU(\g,e)} V$ by elements of $G$.
\end{thm}
\begin{rem}
The theorem is phrased so as to to draw parallels between characteristic zero (Theorem \ref{PGL}) and characteristic $p > 0$. We are proposing that the closure of the decomposition class of the $p$-character might play the role of the associated variety in this theory. Notice that
\begin{enumerate}
\item{since all simple $U(\g)$-modules are finite dimensional they lie in one to one correspondence with their annihilators;}
\item{the primitive ideals in $U(\g_0)$ are $G_0$-stable.}
\end{enumerate}
Taken together, these observations ensure that the second claim of the above theorem is a precise analogue of the second claim in Theorem~\ref{PGL}.
\end{rem}

\p Given the remarks of (\ref{remarksonpchars}) the above theorem can be seen as a claim about the set $\chi + \m^\perp$ or, equivalently,
about the set $e + \m^{\perp_\g}$ where $\m^{\perp_\g} = \{x\in \g : (\m, x) = 0\}$. There is an obvious definition for a decomposition class in $\g$
parallel to the definition in $\g^*$ given above (this was the original context for the study of decomposition classes). Using the form $(\cdot, \cdot)$
we obtain a bijection $\g^* \rightarrow \g$ which preserves decomposition classes. Now Theorem~\ref{invariantsofsimples} will follow immediately 
from the proposition:
\begin{prop}\label{reduprop}
The set $\Ad(G)(e + \m^{\perp_\g})$ is the union of all decomposition classes of $\g$ containing $e$ in their closure.
\end{prop}

\p Before we begin the proof of the proposition we introduce some tools and record some observations which shall be needed in the proof. 
Recall from (\ref{qsnas}) that we constructed a $\Z$-grading on the complex Lie algebra $\g_0 = \bigoplus_{i\in \Z} \g_0(i)$ and
the definition of a fully admissible ring ensures that the $\g_A(i)$ are free $A$-modules. This means that our $\K$-Lie algebra is graded $\g = \bigoplus_{i\in \Z} \g(i)$ and,
by construction, $\bigoplus_{i<-1} \g(i) \subseteq \m$. 
\begin{lem}\label{mperp}
$\bigoplus_{i\leq 0} \g(i) \subseteq \m^{\perp_\g} \subseteq \bigoplus_{i \leq 1} \g(i)$
\end{lem}
\begin{proof}
The form $\kappa_0$ on $\g_0$ places $\g_0(i)$ and $\g_0(-i)$ in a non-degenerate pairing. Since the form $(\cdot, \cdot)$
on $\g$ is obtained from a mod $p$ reduction of a rescaling of $\kappa_0$ it follows that $(\cdot, \cdot)$ places $\g(i)$
and $\g(-i)$ in non-degenerate pairing. Now it follows that $(\bigoplus_{i\leq-j} \g(i))^{\perp_\g} = \bigoplus_{i \leq j-1} \g(i)$ for $j \geq 0$
and so the lemma follows from $\bigoplus_{i\leq-2} \g(i) \subseteq \m \subseteq \bigoplus_{i\leq -1}\g(i)$.
\end{proof}

\p We define a $\K^\times$-action on $\g$ by letting
\begin{eqnarray*}
\rho(t) &: & \g(i) \longrightarrow \g(i)\\
 & & x \longmapsto t^{2-i} x
\end{eqnarray*}
and extend to $\g$ by linearity. The previous lemma shows that $\rho(\K^\times)$ preserves $e + \m^{\perp_\g}$ and restricts
to a contracting action with unique fixed point $e$, which means
\begin{eqnarray}\label{ein}
e \in \overline{\rho(\K^\times)(e + x)}
\end{eqnarray}
for all $x\in \m^{\perp_\g}$. The contracting action is also denoted $$\rho : \K^\times \rightarrow GL(e + \m^{\perp_\g}).$$
\begin{lem}\label{contractingaction}
$\rho(\K^\times)$ preserves the decomposition classes of $\g$.
\end{lem}
\begin{proof}
Define a $\K^\times$-action on $\g$ by $\mu(t) x := t^{-i} x$ for $x \in \g(i)$. Since $\rho(t) = t^2\mu(t)$ and since the map
$x \mapsto t^2 x$ preserves the decomposition classes of $\g$ it suffices to show that the classes are $\mu(\K^\times)$-stable.
Since $\g = \bigoplus_{i\in \Z} \g(i)$ is a grading it follows that $\mu : \K^\times \rightarrow \Aut(G)$.
The main result of \cite{St61} shows that the image of $\mu$ lies in $\Ad(G)$, however the decomposition classes are clearly $\Ad(G)$-stable,
which completes the proof.
\end{proof}

\p We are now ready to give a proof of Proposition \ref{reduprop}.
\begin{proof}
First of all suppose that $\D$ is a decomposition class of $\g$ with $\D \bigcap (e + \mpg) \neq \emptyset$.
Then by (\ref{ein}) for $e + x \in \D \bigcap (e + \mpg)$ we have $e \in \overline{\rho(\K^\times)(e + x)}$.
Now since $\rho(\K^\times)$ preserves the decomposition classes (Lemma~\ref{contractingaction}) and these classes are $G$-stable, it follows that
$\Ad(G) e \subseteq \overline{\D}$.

To complete the proof we show that if $\Ad(G) e \subseteq \overline{\D}$ then $\D \bigcap (e + \mpg) \neq \emptyset$.
By assumption have $\overline{\D} \bigcap (e + \mpg) \neq \emptyset$ and so it will suffice to show that the adjoint
mapping $$G \times (\overline{\D} \bigcap (e + \mpg)) \longrightarrow \overline{\D}$$ is dominant. Indeed, if we
suppose that $\P \subseteq \overline{\D}$ is a non-empty open set contained in $\Ad(G) (\overline{\D} \bigcap (e + \mpg))$
then $\D \bigcap \P$ is also open and non-empty in $\overline{\D}$, since decomposition classes are locally closed,
and every element of $\D \bigcap \P$ is conjugate to an element of $e + \mpg$. To prove dominance it will suffice
to show that the image of $G \times (e + \mpg) \overset{\varphi}{\rightarrow} \g$ contains an open neighborhood of
$e + \mpg$, for then this neighborhood will intersect non-trivially with $\overline{\D}$ and the intersection will
be contained in $\Ad(G) (\overline{\D} \bigcap (e + \mpg))$. Such an open neighborhood of $e + \mpg$ will exist if
the differentials $d_{(1, e+ x)} \varphi : \g \times \mpg \rightarrow \g$ are shown to be surjective for all $x \in \mpg$.
A routine calculation shows that
$$d_{(1, e+ x)} \varphi(u,v) = [u, e + x] + v$$
and now we must check that $\g = [\g, e + x] + \mpg$ for $x \in \mpg$.

In (\ref{centfree}) we observed that the modular reduction of the complex centraliser of $e$ is the modular
centraliser $\g^e$. This shows that $\g^e := \{z \in \g: [z,e] = 0\} \subseteq \bigoplus_{i\geq 0} \g(i)$.
This can be rephrased as saying $\ad(e) : \g(i) \rightarrow \g(i+ 2)$ is injective for $i \leq -1$. By a standard argument
using the form $(\cdot, \cdot)$ this is equivalent to saying that $\ad(e) : \g(i) \rightarrow \g(i+ 2)$ is surjective for
$i \geq -1$ (see \cite[Theorem~1.3]{EK05} for example).

Lemma~\ref{mperp} shows that $\bigoplus_{i\leq 0} \g(i) \subseteq [e + x, \g] + \mpg$. We check that $\g(i) \subseteq [e + x, \g] + \mpg$
for $i > 0$ by induction. Suppose that $i > 0$ and we have shown that $\bigoplus_{j=1}^{i-1} \g(j) \subseteq [e + x, \g] + \mpg$ already.
Then since $\ad(e) : \g(i-2) \twoheadrightarrow \g(i)$ and $x \in \mpg \subseteq \bigoplus_{j\leq 1}\g(j)$ we have
\begin{eqnarray*}
\ad(e + x)\g(i-2) = \{[e, y] + [x, y] : y \in \g(i-2)\} \equiv \g(i) \mod \bigoplus_{j\leq i-1} \g(j)
\end{eqnarray*}
which shows that $\g(i) \subseteq [e + x, \g] + \mpg$ for all $i \in \Z$.

\end{proof}

\section{A generalisation of the main theorem}\label{directedsystem}
\setcounter{parno}{0}

\p In this final section we shall describe a whole family of Morita theorems, parameterised by the ideals $I\unlhd Z_{0}(\g)$ containing $I(\m)$. Consider the composition $$Z_0(\g) \hookrightarrow U(\g) \twoheadrightarrow U_{I(\m)}(\g).$$
\begin{lem}
The image of the $p$-centre under the above composition is isomorphic to $Z_{I(\m)}(\g) := Z_0(\g)/I(\m)$.
\end{lem}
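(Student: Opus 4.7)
The plan is to observe that the lemma reduces to showing that $I(\m) = Z_0(\g) \cap I(\m)U(\g)$, where the intersection is taken inside $U(\g)$. Indeed the image of the composition is $Z_0(\g)/(Z_0(\g) \cap I(\m)U(\g))$, so establishing this equality gives the desired isomorphism. The inclusion $I(\m) \subseteq Z_0(\g) \cap I(\m)U(\g)$ is clear from $1 \in U(\g)$.

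For the reverse inclusion, I would exploit the fact (already invoked in the previous section) that $U(\g)$ is a free $Z_0(\g)$-module of rank $p^{\dim \g}$. More precisely, fixing an ordered basis $x_1,\ldots, x_n$ of $\g$, the PBW-type monomials $\{x^{\aa} := x_1^{a_1}\cdots x_n^{a_n} : 0\leq a_i < p\}$ form a $Z_0(\g)$-basis of $U(\g)$, and crucially this basis contains $1 = x^{\mathbf{0}}$. Any $u \in U(\g)$ therefore has a unique expansion $u = \sum_{\aa} z_{\aa} x^{\aa}$ with $z_{\aa} \in Z_0(\g)$.

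Now given $u \in Z_0(\g) \cap I(\m) U(\g)$, freeness gives a direct sum decomposition $I(\m)U(\g) = \bigoplus_{\aa} I(\m) x^{\aa}$, so each coefficient $z_{\aa}$ of $u$ lies in $I(\m)$. On the other hand $u \in Z_0(\g)$ has the trivial expansion $u = u \cdot 1$, and by uniqueness $u = z_{\mathbf{0}} \in I(\m)$. This forces $Z_0(\g) \cap I(\m)U(\g) = I(\m)$, completing the proof.

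The argument is essentially formal once freeness is in hand; the only subtlety is to invoke a $Z_0(\g)$-basis of $U(\g)$ that includes the element $1$, so that an element of $Z_0(\g)$ is recognised as its own expansion. I do not anticipate any serious obstacle, as freeness of $U(\g)$ over $Z_0(\g)$ and the associated PBW monomial basis are standard (see for example \cite[Section 2.1]{Ja98}).
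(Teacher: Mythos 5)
Your proposal is correct, and it rests on the same reduction as the paper: both proofs boil the lemma down to the equality $I(\m)U(\g)\cap Z_0(\g)=I(\m)$, and both ultimately exploit that $U(\g)$ is free over $Z_0(\g)$. The difference lies in how that equality is justified. The paper dispatches it in one line: freeness implies $U(\g)$ is faithfully flat over $Z_0(\g)$, and then cites Bourbaki (\S 3, No.\ 5, Proposition 8 of the Commutative Algebra volume) for the general fact that $\mathfrak{a}B\cap A=\mathfrak{a}$ for a faithfully flat extension. You instead prove this instance by hand, using the PBW-type $Z_0(\g)$-basis of restricted monomials, the key point being that this basis contains $1$, so that $Z_0(\g)$ sits inside $U(\g)$ as a direct summand and an element of $Z_0(\g)\cap I(\m)U(\g)$ is forced by uniqueness of coefficients to have its sole nonzero coefficient in $I(\m)$. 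Your argument is more elementary and self-contained (it is essentially the standard proof of the faithful-flatness statement in the free case), at the cost of a few lines; the paper's citation is shorter and makes the mechanism (faithful flatness) explicit, which is the more portable formulation. The only point worth spelling out slightly more in your write-up is the identity $I(\m)U(\g)=\bigoplus_{\mathbf a} I(\m)x^{\mathbf a}$: it holds because $Z_0(\g)$ is central, so any element of $I(\m)U(\g)$ is a sum $\sum_j z_j u_j$ with $z_j\in I(\m)$, and expanding each $u_j$ in the monomial basis puts every coefficient in $I(\m)$ since $I(\m)$ is an ideal of the commutative ring $Z_0(\g)$.
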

\begin{proof}
The content of the claim is $I(\m) U(\g) \bigcap Z_0(\g) = I(\m)$. Recall that $U(\g)$ is a free $Z_0(\g)$-module, hence faithfully flat. Now the claim follows from \cite[\S 3, No. 5, Proposition 8]{Bo72}.
\end{proof}
\noindent The image of the composition should be regarded as the $p$-centre of $U_{I(\m)}(\g)$. We identify it with $Z_0(\g)/I(\m)$ and denote by $\rho : Z_0(\g) \rightarrow Z_{I(\m)}(\g)$ the natural projection. The ideals of $Z_{I(\m)}(\g)$ identify with the ideals of $Z_0(\g)$ containing $I(\m)$. Now for every $I \unlhd Z_{I(\m)}(\g)$ we have an algebra isomorphism $U_{\rho^{-1} I}(\g) := U(\g)/ \rho^{-1}(I) U(\g) \cong U_{I(\m)}(\g)/IU_{I(\m)}(\g)$ and the main theorem of this paper implies that the left regular representation of this algebra decomposes as $\bigoplus_{D(\chi)} \pi_I Q_\chi$, where $\pi_I : U_{I(\m)}(\g)\md \rightarrow U_{I}(\g)\md; V \rightarrow V/IV$. Following the same argument as was used in (\ref{restateand1imp2}) we conclude that $$U_{\rho^{-1} I}(\g) \cong \Mat_{D(\chi)} \End_\g(\pi_I Q_\chi)^\op.$$

\p In order to sharpen the previous observation we shall identify $\End_\g(\pi_I Q_\chi)^\op$ with a quotient of $\wU(\g,e)$.
We fix an ideal $I \unlhd Z_{I(\m)}(\g)$. The centre $Z(U_{I(\m)}(\g))$ acts on $Q_\chi$ and we let
$\psi : Z(U_{I(\m)}(\g)) \rightarrow \End_\K(Q_\chi)$ denote the representation. The image $\psi(I)$ clearly lies in
$\End_\g(Q_\chi)$, and by definition it must lie in the centre of the latter. The map $\End_\g(Q_\chi) \rightarrow
\End_\g(Q_\chi)^\op = \wU(\g,e)$, which identifies the two algebras as sets, is an anti-isomorphism. It restricts to an
isomorphism between the centres of the two, and so $\psi(I)$ identifies with a (non-unital) subalgebra of $\wU(\g,e)$.
Certainly $\psi(I)$ is not an ideal of $\wU(\g,e)$ but we may consider the ideal $\wI$ generated by $\psi(I)$. Write
$$\wU_I(\g,e) := \wU(\g,e)/\wI.$$
\begin{thm}\label{generalise}
Retain the hypotheses of the main theorem. Then for every ideal $I \unlhd Z_{I(\m)}(\g)$ we have
$$U_{\rho^{-1}I}(\g) \cong \Mat_{D(\chi)} \wU_I(\g,e).$$
\end{thm}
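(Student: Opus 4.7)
The plan is to pass the main-theorem isomorphism $U_{I(\m)}(\g) \cong \Mat_{D(\chi)} \wU(\g,e)$ through the quotient by the ideal $I U_{I(\m)}(\g)$ on each side. The main work lies in computing the ideal generated by $I$ inside the matrix algebra and identifying it with $\Mat_{D(\chi)} \wI$. This route is rather more economical than proving $\End_\g(\pi_I Q_\chi)^\op \cong \wU_I(\g,e)$ by hand through the restriction map, because the matrix isomorphism already packages all the required information.

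The first step is to locate the image of $Z_{I(\m)}(\g)$. Under the main isomorphism, $Z_{I(\m)}(\g)$ lies in the centre of $U_{I(\m)}(\g)$ and therefore maps into the centre of $\Mat_{D(\chi)} \wU(\g,e)$, which consists of scalar matrices $c \cdot 1_{D(\chi)}$ with $c \in Z(\wU(\g,e))$. This determines a ring homomorphism $\tilde{\psi} : Z_{I(\m)}(\g) \to Z(\wU(\g,e))$. I would then verify that $\tilde{\psi}$ coincides with the representation $\psi$ introduced just before the theorem. Indeed, Corollary~\ref{smellycor} together with the construction of the matrix isomorphism in Section~\ref{proofofthe} identifies the column space $C \subseteq \Mat_{D(\chi)} \wU(\g,e)$ with $Q_\chi$ as $(U(\g), \wU(\g,e))$-bimodules; under this identification, the scalar-matrix left action of $\tilde{\psi}(z) \cdot 1_{D(\chi)}$ on $C$ must agree with left multiplication by $z$ on $Q_\chi$, which is by definition $\psi(z)$. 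Since $Q_\chi$ is a free (hence faithful) right $\wU(\g,e)$-module of rank $D(\chi)$ by Corollary~\ref{smellycor}, this forces $\tilde{\psi}(z) = \psi(z)$.

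With the identification $\tilde{\psi} = \psi$ in hand, the image of $I U_{I(\m)}(\g)$ inside the matrix algebra is
$$\tilde{\psi}(I) \cdot \Mat_{D(\chi)} \wU(\g,e) \;=\; \Mat_{D(\chi)}\bigl(\psi(I)\,\wU(\g,e)\bigr) \;=\; \Mat_{D(\chi)} \wI,$$
where centrality of $\psi(I)$ in $\wU(\g,e)$ ensures that the left ideal $\psi(I)\,\wU(\g,e)$ coincides with the two-sided ideal $\wI$ it generates. Passing to the quotient on both sides yields
$$U_{\rho^{-1}I}(\g) \;\cong\; \Mat_{D(\chi)} \wU(\g,e)/\Mat_{D(\chi)} \wI \;\cong\; \Mat_{D(\chi)}\bigl(\wU(\g,e)/\wI\bigr) \;=\; \Mat_{D(\chi)} \wU_I(\g,e),$$
as required.

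The only non-routine point is the identification $\tilde{\psi} = \psi$, which depends on the explicit form of the matrix isomorphism from the main theorem and on the bimodule description of the column space. Care is needed to handle the opposite-algebra conventions for $\wU(\g,e) = \End_\g(Q_\chi)^\op$, but since only central elements of $\wU(\g,e)$ are compared the left/right distinction is harmless, and the identification is essentially forced by the faithfulness of the right action of $\wU(\g,e)$ on $Q_\chi$.
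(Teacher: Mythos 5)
Your route is genuinely different from the paper's, and it does work, but one step needs a better justification than the one you give. The paper does not quotient the matrix isomorphism; it first notes that the main theorem gives $U_{\rho^{-1}I}(\g) \cong \Mat_{D(\chi)}\End_\g(\pi_I Q_\chi)^\op$, and then proves $\End_\g(\pi_I Q_\chi)^\op \cong \wU_I(\g,e)$ directly: the natural map $\wU(\g,e)\rightarrow \End_\g(\pi_IQ_\chi)^\op$ is surjective because $Q_\chi$ is projective (Corollary~\ref{maincor}), and its kernel is shown to equal $\wI$ by an idempotent argument inside $U_{I(\m)}(\g)\cong\Mat_{D(\chi)}\wU(\g,e)$, using that $Z_{I(\m)}(\g)$ preserves the decomposition $U_{I(\m)}(\g)i = iU_{I(\m)}(\g)i\oplus(1-i)U_{I(\m)}(\g)i$. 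Your approach is more economical for the stated isomorphism, but it does not by itself produce the identification $\End_\g(\pi_IQ_\chi)^\op\cong\wU_I(\g,e)$, which is exactly what the paper's proof delivers and what feeds the corollary $U_\eta(\g,e)\cong\wU_{\wI_\eta}(\g,e)$ that follows the theorem.

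The weak link in your write-up is the identification $\tilde\psi=\psi$. Corollary~\ref{smellycor} asserts only a \emph{left}-module isomorphism $C\cong Q_\chi$; a left-module isomorphism intertwines the right $\wU(\g,e)$-structures only up to an automorphism of $\wU(\g,e)$, so quoting that corollary as a bimodule identification is not justified as stated, and your faithfulness argument rests on it. The fix is short, and bypasses $C$ altogether: trace a central element through the construction of the matrix isomorphism in \ref{restateand1imp2}. For $z\in Z_{I(\m)}(\g)$, right multiplication by $z$ on $U_{I(\m)}(\g)$ coincides with left multiplication by $z$; transporting this right endomorphism through the left-module isomorphism $U_{I(\m)}(\g)\cong\bigoplus_{D(\chi)}Q_\chi$ of Theorem~\ref{mainthm}(1) gives the componentwise action of $z$ on each copy of $Q_\chi$, i.e.\ the diagonal matrix $\psi(z)\cdot 1_{D(\chi)}$ in $\Mat_{D(\chi)}\wU(\g,e)$. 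Hence $\tilde\psi(z)=\psi(z)$ on the nose, and the rest of your argument (centrality of $\psi(I)$ giving $\tilde\psi(I)\Mat_{D(\chi)}\wU(\g,e)=\Mat_{D(\chi)}\wI$, and $\Mat_{D(\chi)}\wU(\g,e)/\Mat_{D(\chi)}\wI\cong\Mat_{D(\chi)}\wU_I(\g,e)$) goes through as written.
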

\begin{proof}
It is easy to see that if $R$ is any associative ring and $D \in \N$ then the diagonal embedding
$\Xi: Z(R) \rightarrow Z(\Mat_D(R))$ is an isomorphism. Furthermore, if $J \unlhd Z(R)$ then
we have $$\Mat_D(R/JR) \cong \Mat_D(R)/\Xi(J) \Mat_D(R).$$ If we apply this observation to the
case $R = \wU(\g,e)$ and $D = D(\chi)$, the current theorem will follow from the claim that
$\psi$ and $\Xi$ are inverse isomorphisms
$$Z(U_{I(\m)}(\g)) \underset{\Xi}{\overset{\psi}{\rightleftarrows}} Z\wU(\g,e).$$
The fact that $\psi \circ \Xi = \Id_{Z\wU(\g,e)}$ follows from the definitions.
\end{proof}

\p Recall the reduced finite $W$-algebras $U_\eta(\g,e)$ from Section \ref{generalprops}.
An alternative description of these algebras follows from the proof of the theorem.
\begin{cor}
For $\eta \in \chi + \m^\perp$ we have $U_\eta(\g,e) \cong \wU_{I_\eta}(\g,e)$. $\hfill \qed$
\end{cor}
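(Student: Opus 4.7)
The plan is to derive the corollary as an immediate specialisation of the preceding theorem to a maximal ideal. Fix $\eta \in \chi + \m^\perp$ and set $I := I_\eta / I(\m) \unlhd Z_{I(\m)}(\g)$; this is a maximal ideal since $I_\eta$ is a maximal ideal of $Z_0(\g)$. Under the projection $\rho \colon Z_0(\g) \to Z_{I(\m)}(\g)$ we have $\rho^{-1}(I) = I_\eta$, and so
\[
U_{\rho^{-1}I}(\g) \;=\; U(\g)/I_\eta U(\g) \;=\; U_\eta(\g).
\]
Applying the preceding theorem to this choice of $I$ yields $U_\eta(\g) \cong \Mat_{D(\chi)} \wU_{\wI_\eta}(\g,e)$.

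To upgrade this matrix statement to a direct identification $U_\eta(\g,e) \cong \wU_{\wI_\eta}(\g,e)$, rather than appealing to abstract Morita uniqueness, I would simply unpack the construction used in the proof of the theorem. The key identification made there was $\wU_I(\g,e) \cong \End_\g(\pi_I Q_\chi)^\op$. In our setting, because Lemma \ref{ImannihilatesQ} tells us that $I(\m)$ already annihilates $Q_\chi$, the coinvariant quotient computed in $U_{I(\m)}(\g)\md$ agrees with the one computed in $U(\g)\md$, giving $\pi_I Q_\chi = Q_\chi/I_\eta Q_\chi = \pi_\eta Q_\chi$. By the definition of the reduced finite $W$-algebra from Subsection \ref{crggm} we therefore obtain
\[
\wU_{\wI_\eta}(\g,e) \;\cong\; \End_\g(\pi_\eta Q_\chi)^\op \;=\; U_\eta(\g,e),
\]
which is the claim.

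There is no real obstacle here: the corollary is essentially the restriction of the theorem to a closed point of $\Spec Z_{I(\m)}(\g)$, and the only point that requires a moment's thought is the bookkeeping comparison $\pi_I Q_\chi = \pi_\eta Q_\chi$, which is immediate from the fact that $I(\m)$ acts trivially on $Q_\chi$. One could alternatively phrase the same argument as the observation that Theorem \ref{matrixiso}(3) and the preceding theorem produce two matrix algebra decompositions of $U_\eta(\g)$ of the same rank, and that these decompositions arise from the same module $\pi_\eta Q_\chi$, but passing through the explicit identification of endomorphism rings is the most transparent route.
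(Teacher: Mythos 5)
Your argument is correct and is essentially the paper's: the corollary is stated with no separate proof precisely because it follows from the identification $\End_\g(\pi_I Q_\chi)^\op \cong \wU_I(\g,e)$ established in the proof of the preceding theorem, specialised to $I$ the image of $I_\eta$ in $Z_{I(\m)}(\g)$, together with the observation $\pi_I Q_\chi = \pi_\eta Q_\chi$ (valid since $I(\m)$ kills $Q_\chi$) and the definition $U_\eta(\g,e) = \End_\g(\pi_\eta Q_\chi)^\op$. Your bookkeeping of $\rho^{-1}(I) = I_\eta$ and the resulting matrix statement is a harmless extra framing, not a deviation.
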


\p The above corollary combines with the main theorem to subtend the following diagram, which
clarifies the relationship between the restricted and unrestricted modular finite $W$-algebras.
\begin{cor}\label{comdiag}
We have a commutative diagram

\begin{center}
\begin{tikzpicture}[node distance=3.2cm, auto]
 \node (A) {$U(\g)$};
\begin{scope}[node distance=1.3cm and 10cm]
 \node (B) [below of= A] {$U_{I(\m)}(\g)$};
\end{scope}
 \node (C) [right of= B] {$\Mat_{D(\chi)} \wU(\g,e)$};
  \node (D) [right of=C]{$\wU(\g, e)$};
	\begin{scope}[node distance=1.5cm and 10cm]
	\node (E) [below of= B] {$U_\chi(\g)$};
	\end{scope}
 \node (F) [right of= E] {$\Mat_{D(\chi)} U^{[p]}(\g,e)$};
  \node (G) [right of=F]{$U^{[p]}(\g, e)$};
 \draw[>=stealth', ->>] (A) to node {$ $} (C);
 \draw[>=stealth', ->>] (A) to node {$ $} (B);
 \draw[>=stealth', ->>] (B) to node {$ $} (E);
 \draw[>=stealth', ->>] (C) to node {$ $} (F);
 \draw[>=stealth', ->>] (D) to node {$ $} (G);
 \draw[>=stealth', ->] (B) to node {$\sim$} (C);
 \draw[>=stealth', ->] (E) to node {$\sim$} (F);
 \draw[left hook-latex] (D) to node {$ $} (C);
 \draw[left hook-latex] (G) to node {$ $} (F);
\end{tikzpicture}
\end{center}
where the inclusions on the right are the diagonal embeddings, the projection from $\Mat_{D(\chi)} \wU(\g,e)$ to $\Mat_{D(\chi)} U^{[p]}(\g,e)$ is induced from the natural projection from $U_{I(\m)}(\g)$ to $U_\chi(\g)$ and the projection on the far right is obtained by taking $\chi = \eta$ in the previous corollary.
$\hfill \qed$
\end{cor}

\end{document}